\tikzset{every path/.style={line width=0.4pt},every node/.style={transform shape,knot crossing,inner sep=1.5pt},>=triangle 60,text node/.style={rectangle,transform shape=false,black}}
\theoremstyle{plain}      
\newtheorem{thm}{Theorem}[section]     
\newtheorem{theorem}[thm]{\bf Theorem}     
\newtheorem{lemma}[thm]{\bf Lemma}     
\newtheorem{proposition}[thm]{\bf Proposition}
\theoremstyle{remark}      
\newtheorem{example}[thm]{Example} 
\newtheorem{remark}[thm]{Remark} 
\theoremstyle{definition}
\title[]{}
\subjclass[2020]{13D10, 14B10, 14B12, 14D15, 14B07}
\keywords{Infinitesimal deformations,  abstract rigidity, local rigidity, cones over projective varieties, isolated singularities}
\begin{document}

\title{On infinitesimal deformations of singular curves} 

\author{Mounir Nisse}
 
\address{Mounir Nisse\\
Department of Mathematics, Xiamen University Malaysia, Jalan Sunsuria, Bandar Sunsuria, 43900, Sepang, Selangor, Malaysia.
}
\email{mounir.nisse@gmail.com, mounir.nisse@xmu.edu.my}
\thanks{}

\title{On infinitesimal deformations of singular varieties III}

\maketitle


\begin{abstract}
We study the affine cone over a reducible nodal curve $X$
obtained by gluing three projective lines along three pairs of points to form a 
connected curve of arithmetic genus \(1\).  
We endow \(X\) with a line bundle \(L\) of multidegree \((4,3,3)\), and we show 
that \(L\) is very ample, giving an embedding 
\(\varphi_L:X\hookrightarrow \mathbb P^9\).  
We then analyze in detail the affine cone
\[
C(X)=\operatorname{Spec}\!\left( \bigoplus_{m\ge 0} H^0(X,L^{\otimes m}) \right)
\]
and determine its singular locus, which consists of three singular lines meeting 
at the vertex.  

Central to the deformation theory of the cone are the line bundles \(F_m\) that 
appear in Pinkham’s description of \(\mathbb G_m\)-equivariant deformations.  
We compute the cohomology of \(F_m\) for all \(m\in\mathbb Z\), proving in 
particular that the degree-zero bundle \(F_0=T_X\) satisfies 
\[
H^0(X,T_X)=0,\qquad H^1(X,T_X)=0,
\]
for a generic choice of gluing data.  
This implies \(T^1_{C(X)}(0)=0\), so the cone has no degree-zero infinitesimal 
equivariant deformations.  
We likewise determine the positive- and negative-weight components, and we use 
them to construct an explicit negatively graded smoothing of the cone.  
This example provides a fully computable instance of a non-isolated surface 
singularity whose deformation theory is governed entirely by the geometry of a 
reducible genus-one curve.
\end{abstract}

\section{Introduction}

Affine cones over projective varieties form a fundamental class of singularities 
in algebraic geometry.  
They appear naturally in birational geometry, in the study of degenerations, 
and in the compactification of moduli spaces via geometric invariant theory, 
and in analytic considerations concerning canonical metrics. 
Understanding their deformation theory is especially important: deforming the 
cone corresponds to deforming the underlying projective variety, but often in a 
more rigid and highly structured way due to the presence of a $\mathbb G_m$-action.

\vspace{0.2cm}

In gravitational physics, deformation theory offers a precise framework for
analyzing infinitesimal and finite perturbations of spacetime geometries,
including black hole solutions of Einstein’s equations.
Such deformations capture stability properties, quasi-normal modes, and
geometric variations of near-horizon structures, which are central to the
dynamics of event horizons.
From this perspective, deformation theory provides a mathematical tool for
probing how black hole geometries respond to physical perturbations and
external fields. This is an ongoing work  with Yen-Kheng Lim 
 on some applications in physics connected to the actual work ({\it c.f.} \cite{LN1-25}, and \cite{LN2-25}).

\vspace{0.3cm}

In this paper we study the cone over a particular nodal curve of genus one, 
embedded in projective space by a very ample line bundle of multidegree 
$(4,3,3)$.  
The curve is reducible, consists of three rational components, and has precisely 
three nodes.  
Its arithmetic genus is one, so it lies on the boundary of the moduli space of 
stable curves of genus one.  
As such, it provides a rich testing ground in which the geometry of nodal curves, 
their line bundles, and the deformation theory of the associated cones can all 
be computed explicitly.

\subsection*{\it Relation to moduli of stable curves}

The curve $X$ considered here represents a point in the boundary of the moduli 
stack $\overline{\mathcal M}_1$ of stable curves of genus one.  
Its dual graph contains a single cycle, and the moduli of its combinatorial type 
is zero-dimensional: up to isomorphism, there is essentially one such curve.  
Thus $X$ is \emph{rigid} as a nodal curve with fixed combinatorial structure.  
Nontrivial deformations of $X$ in the moduli space arise only by smoothing its 
nodes.  
This behavior is characteristic of stable curves of compact type or of curves 
whose automorphism group is finite.

The moduli of line bundles on $X$ forms a one-dimensional generalized Jacobian.  
This means that the pair $(X,L)$, with $L$ of fixed multidegree, varies in a 
one-dimensional family even though $X$ itself is combinatorially rigid.  
This interplay between the rigidity of the curve and the flexibility of its 
Picard group plays a central role in the deformation theory of the cone.

\noindent {\it The embedded curve and its cone.}
The line bundle $L$ of multidegree $(4,3,3)$ is very ample and defines an 
embedding of $X$ into projective space.  
The associated affine cone $C(X)$ is a normal surface with a 
one-dimensional singular locus consisting of three lines meeting at the vertex.  
This type of singularity is a natural generalization of the cones over smooth 
curves, but the reducible and nodal nature of $X$ introduces new deformation 
phenomena.

\begin{theorem}[Very ampleness]
The line bundle $L$ is very ample and embeds $X$ as a projectively normal curve.  
The associated cone $C(X)$ is normal and has a singular locus consisting of the 
vertex together with three one-dimensional components lying over the nodes of $X$.
\end{theorem}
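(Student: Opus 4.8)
\smallskip
\noindent\emph{Proof strategy.}
The plan is to reduce every assertion to a computation on the normalization $\pi\colon\widetilde X=X_1\sqcup X_2\sqcup X_3\to X$, where each $X_i\cong\mathbb{P}^1$ and $\pi^*L|_{X_i}\cong\mathcal{O}_{\mathbb{P}^1}(d_i)$ with $(d_1,d_2,d_3)=(4,3,3)$; write $\nu_{12},\nu_{23},\nu_{13}$ for the three nodes, labelled by the components they join. The workhorse is the conductor sequence
\[
0\longrightarrow L\longrightarrow \pi_*\pi^*L\longrightarrow \textstyle\bigoplus_{\nu}\mathbb{C}_{\nu}\longrightarrow 0,
\]
whose last map records, at each node, the difference of the two branch--values read off through the (nonzero) gluing scalar. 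Because $d_i\ge 3$ for all $i$, one has $H^1\bigl(\widetilde X,\pi^*L(-D)\bigr)=0$ for every effective divisor $D$ of degree $\le 2$ on each component, so that every cohomological question below becomes the linear algebra of an evaluation map $H^0\bigl(\widetilde X,\pi^*L(-D)\bigr)\to\bigoplus_{\nu}\mathbb{C}_{\nu}$; the only input needed is the elementary fact that evaluation of $\mathcal{O}_{\mathbb{P}^1}(d)$ at a finite set of distinct points is surjective whenever the relevant twist has degree $\ge 0$.

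\smallskip
\noindent\emph{Very ampleness.} Cohomology of the conductor sequence presents $H^0(X,L)$ as the kernel of a surjection $\mathbb{C}^{13}\twoheadrightarrow\mathbb{C}^{3}$ (surjective since evaluation at the six distinct node--preimages is onto and the gluing scalars are nonzero), giving $h^0(X,L)=10$ and $h^1(X,L)=0$, consistent with $\varphi_L\colon X\hookrightarrow\mathbb{P}^9$. For very ampleness I check: (i) global generation; (ii) separation of any two points of $X$ and of tangent vectors at each smooth point; (iii) that $\varphi_L$ is an immersion at each node. Items (i)--(ii) amount, in each of the finitely many configurations (two points on the same or on different components, possibly including a node; a tangent vector at a smooth point), to the surjectivity of $H^0\bigl(\widetilde X,\pi^*L(-D)\bigr)$ onto the relevant space of node conditions with $\deg D\le 2$, which follows from the $\mathbb{P}^1$--fact above. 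For (iii), at a node $\nu_{12}$ with preimages $p\in X_1$ and $q\in X_2$, it suffices to produce a global section of $L$ with $\ord_p=1$ and $\ord_q\ge 2$, and a second one with $p$ and $q$ interchanged; the conductor sequence gives that the space of sections with $\ord_p\ge 1,\ \ord_q\ge 2$ strictly contains its sublocus with $\ord_p\ge 2,\ \ord_q\ge 2$, which produces such a section. Since every evaluation map occurring is surjective for arbitrary nonzero gluing scalars, very ampleness holds for all gluing data (genericity enters only later, in the computation of $H^{*}(X,T_X)$).

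\smallskip
\noindent\emph{Projective normality and the cone.} Applying the conductor sequence to $L^{\otimes m}$ (pullback of degree $md_i\ge 3m$) yields $H^1(X,L^{\otimes m})=0$ for all $m\ge 1$. Normal generation --- surjectivity of $\Sym^m H^0(X,L)\to H^0(X,L^{\otimes m})$ for all $m$, which is exactly the projective normality of $\varphi_L(X)$ --- then follows from the numerical conditions $\deg(L|_Y)\ge 2p_a(Y)+1$ for every connected subcurve $Y\subseteq X$ (here $d_i\ge 1$ on the components, $d_i+d_j\ge 1$ on the two--component chains, and $\deg L=10\ge 3=2p_a(X)+1$) by the standard Castelnuovo--Mumford argument for nodal curves; equivalently one argues by induction on $m$, the only non-formal step --- surjectivity of $H^0(X,L)^{\otimes 2}\to H^0(X,L^{\otimes 2})$ --- reducing once more, via the conductor sequence, to the classical surjectivity $H^0(\mathcal{O}_{\mathbb{P}^1}(d_i))^{\otimes 2}\to H^0(\mathcal{O}_{\mathbb{P}^1}(2d_i))$ on each component. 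Consequently $R=\bigoplus_{m\ge 0}H^0(X,L^{\otimes m})$ is the homogeneous coordinate ring of $\varphi_L(X)$, generated in degree $1$, with $\Proj R=X$; it is reduced, and since $H^0(X,L^{\otimes m})=0$ for $m<0$ one gets $H^0_{\mathfrak m}(R)=H^1_{\mathfrak m}(R)=0$, so $C(X)=\Spec R$ is Cohen--Macaulay of dimension $2$, each of its three irreducible components being the (normal) affine cone over a rational normal curve of degree $d_i$, these three cones meeting along the three lines over the nodes. Finally $C(X)\setminus\{0\}$ is a $\mathbb{G}_m$--torsor over $X$, hence smooth over $X$, so its singular locus is the preimage of $\operatorname{Sing}(X)=\{\nu_{12},\nu_{23},\nu_{13}\}$, a union of three $\mathbb{G}_m$--orbits; the vertex is singular because $\dim T_0C(X)=h^0(X,L)=10>2$; the closures of these orbits each add the vertex, so $\operatorname{Sing}(C(X))=\ell_{12}\cup\ell_{23}\cup\ell_{13}$ is the union of three lines through the vertex lying over the nodes of $X$, as asserted.

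\smallskip
\noindent\emph{Where the difficulty lies.} The two steps needing genuine care are the immersion condition (iii) at the nodes --- the ``separation of branches'', where one must exhibit sections with prescribed vanishing orders along each branch --- and, if one prefers not to invoke the Castelnuovo--Mumford criterion, the hands-on surjectivity of $H^0(X,L)^{\otimes 2}\to H^0(X,L^{\otimes 2})$. Both are dispatched by the same device of pulling back to $\widetilde X=\bigsqcup\mathbb{P}^1$, where $\pi^*L$ has degree $\ge 3$ on each factor, so that every obstructing $H^1$ vanishes and only the finitely many node conditions remain to be checked. Granting that reduction, the rest --- including the singular locus --- is bookkeeping.
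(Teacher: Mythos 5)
Your reduction to the normalization $\pi\colon\widetilde X\to X$ and to the linear algebra of evaluation maps on each $\mathbb P^1$ is exactly the paper's strategy for the very-ampleness part, and your treatment of projective normality, the Cohen--Macaulayness of $R$, and the singular locus of the cone goes beyond what the paper actually writes down (the body of the paper only proves very ampleness; the remaining clauses of this theorem are never argued). However, there is a concrete error: you are working with the wrong curve. You label the nodes $\nu_{12},\nu_{23},\nu_{13}$, i.e.\ you assume each pair of components meets in one node, whereas in the paper $C_2$ and $C_3$ are \emph{disjoint} and the three nodes are $a_1\sim c_1$ (joining $C_1,C_3$), $a_2\sim b_2$ (joining $C_1,C_2$), and a \emph{self-node} $b_1\sim b_3$ of $C_2$. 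This is not merely a relabeling: the component $C_2$, with $d_2=3$ and hence $h^0(\mathcal O_{\mathbb P^1}(3))=4$, carries three of the six node-preimages, and the self-node is precisely the tight case of your immersion condition (iii). There both branches lie on $C_2$, so the section with $\ord_{b_1}=1$ and $\ord_{b_3}\ge 2$ must be found inside $H^0(\mathcal O_{\mathbb P^1}(3))$; the count is $h^0(\mathcal O(3)(-b_1-2b_3))=1>0=h^0(\mathcal O(3)(-2b_1-2b_3))$, so the conclusion survives, but with no margin, and your case analysis as written (every node joining two distinct components, at most two special points per component) simply does not cover it. The same caveat applies to your verification of the subcurve inequalities for normal generation: in the actual configuration the subcurve $C_2$ by itself has arithmetic genus $1$ because of its self-node, so the relevant bound is $\deg(L|_{C_2})=3\ge 2p_a(C_2)+1=3$, again exactly on the boundary rather than the slack inequalities you quote.

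Separately, be careful with the word ``normal.'' Your own (correct) description of $C(X)$ as three normal cones over rational normal curves of degrees $4,3,3$ glued along three lines shows that $C(X)$ is reducible with components meeting in codimension one, hence is \emph{not} a normal scheme; what your argument actually delivers is that $\varphi_L(X)$ is normally generated (so $R$ is the homogeneous coordinate ring and $C(X)$ is arithmetically Cohen--Macaulay) and that $\operatorname{Sing}(C(X))$ is the union of the three lines over the nodes together with the vertex. You should state this explicitly rather than leave the normality clause of the theorem unaddressed. That said, your $\mathbb G_m$-torsor argument locating the singular locus away from the vertex over $\operatorname{Sing}(X)$, and the tangent-space count $\dim T_0C(X)=h^0(X,L)=10>2$ at the vertex, are correct and constitute a clean proof of the last assertion --- one the paper itself never supplies.
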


\subsection*{\it Graded deformations of the cone}

A key tool in our study is Pinkham’s decomposition of the deformation space of a 
cone into graded pieces:
\[
T^1_{C(X)} = \bigoplus_{m\in\mathbb Z} T^1_{C(X)}(m),
\]
where each summand corresponds to deformations of weight $m$ under the natural 
$\mathbb G_m$-action.  
These graded pieces can be computed on the curve $X$ using cohomology of a 
family of line bundles $F_m$ that encode the action of the grading.

The cohomology of $F_m$ turns out to be particularly simple, depending only on 
the sign of $m$.


\vspace{0.1cm}

\begin{theorem}[Cohomology controlling deformations]
For all integers $m\neq 0$, the graded piece $T^1_{C(X)}(m)$ is governed by the 
cohomology of $F_m$:
\[
T^1_{C(X)}(m)\cong
\begin{cases}
H^1(X,F_m)^\vee & m>0,\\
H^0(X,F_m) & m<0.
\end{cases}
\]
Moreover,
\[
H^0(X,F_0)=0,\qquad H^1(X,F_0)=0,
\]
so the degree-zero graded component satisfies:
\(
T^1_{C(X)}(0)=0.
\)
\end{theorem}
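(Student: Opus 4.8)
The plan is to combine Pinkham's $\mathbb G_m$-equivariant deformation theory for the cone over the projectively normal curve $(X,L)$ of Theorem~1.1 with an explicit cohomology computation on the normalization $\nu\colon\widetilde X=L_1\sqcup L_2\sqcup L_3\to X$. The identifications for $m\neq0$ are the bookkeeping part. For $m>0$ one checks that the ``trivial'' summand of the complex computing $T^1_{C(X)}(m)$ is as large as possible: this uses $H^1(X,L^{\otimes m})=0$ (immediate from $\omega_X\cong\mathcal O_X$, since $X$ is a cycle of three smooth rational curves, together with the positivity of the multidegree $(4m,3m,3m)$) and the surjectivity of $H^0(X,L)\otimes H^0(X,L^{\otimes m})\to H^0(X,L^{\otimes(m+1)})$, which is projective normality. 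What remains is $H^1$ of the sheaf $F_m$, which the duality built into Pinkham's description of positive-weight deformations presents as $H^1(X,F_m)^{\vee}$; for $m<0$ the relevant obstruction-type $H^1$'s vanish for the dual positivity reason, leaving $T^1_{C(X)}(m)\cong H^0(X,F_m)$.

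The substance is the weight-zero statement, and here the essential structural fact is that $F_0$ (the sheaf denoted $T_X$ in the statement) is a line bundle on $X$ of multidegree $(0,0,0)$. It therefore restricts to $\mathcal O_{\mathbb P^1}$ on each component, so a global section of $F_0$ is a triple of scalars $(c_1,c_2,c_3)$ which must be matched across the three nodes by the three gluing scalars $\mu_1,\mu_2,\mu_3\in\mathbb G_m$ presenting $F_0$ as a point of $\operatorname{Pic}^{(0,0,0)}(X)\cong\mathbb G_m$. The comparison sequence $0\to F_0\to\nu_*\nu^*F_0\to\bigoplus_{j=1}^3 k_{p_j}\to0$ then yields
\[
0\longrightarrow H^0(X,F_0)\longrightarrow k^{3}\xrightarrow{\ \rho\ }k^{3}\longrightarrow H^1(X,F_0)\longrightarrow 0 ,
\]
and cycling once around the triangle computes $\det\rho=1-\mu_1\mu_2\mu_3$. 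Since $\chi(X,F_0)=\deg F_0+1-p_a(X)=0$, the groups $H^0(X,F_0)$ and $H^1(X,F_0)$ vanish simultaneously, and they vanish precisely when $\mu_1\mu_2\mu_3\neq1$, i.e.\ precisely when $F_0\not\cong\mathcal O_X$.

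It remains to see that $F_0\not\cong\mathcal O_X$ for a generic choice of gluing data. As $X$ is combinatorially rigid, the only modulus is the polarization: $\operatorname{Pic}^{(4,3,3)}(X)\cong\mathbb G_m$, parametrized by a single gluing parameter $t=t(L)$. One then unwinds Pinkham's construction of $F_0$ at each of the three nodes --- $F_0$ being assembled from $L$ together with the tangent directions of the embedding along the nodes --- to write the monodromy $\mu(F_0)=\mu_1\mu_2\mu_3$ as a function of $t$ (the node positions on the $L_i$ being fixed); this function has the form $c\,t^{k}$ for some $c\in\mathbb G_m$ and $k\in\mathbb Z$. If $k\neq0$ it is nonconstant and hence $\neq1$ off a finite subset of $\mathbb G_m$, which is exactly the asserted genericity; if $k=0$ one instead checks the single identity $c\neq1$. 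Either way $F_0$ is generically nontrivial, so $H^0(X,F_0)=H^1(X,F_0)=0$, and substituting into the weight-zero part of Pinkham's description gives $T^1_{C(X)}(0)=0$.

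I expect this last step to be the main obstacle: one must follow Pinkham's construction of $F_0$ closely enough at each node to read off the scalars $\mu_j$ and then verify that $\mu_1\mu_2\mu_3$ is not identically $1$ on $\operatorname{Pic}^{(4,3,3)}(X)$. A secondary technical point is to set up the normalization sequence for $F_0$ correctly at the nodes --- using that $F_0$ is genuinely a line bundle, so that each skyscraper stalk is one-dimensional --- and to keep the duality conventions consistent between the $m>0$, $m=0$, and $m<0$ regimes.
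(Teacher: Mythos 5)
For $m\neq 0$ your treatment is essentially the same as the paper's: both of you delegate the identification of $T^1_{C(X)}(m)$ with cohomology of $F_m$ to the Pinkham--Schlessinger machinery (the paper defers the details to its companion reference), and nothing in your sketch of that part conflicts with the paper. The substance, as you say, is the weight-zero claim, and there your route genuinely diverges: the paper deduces $H^0(X,T_X)=0$ from an assumed finiteness of $\operatorname{Aut}(X)$ for generic gluing and then gets $H^1(X,T_X)=0$ from $\chi(T_X)=0$, whereas you try to compute both groups directly from the normalization sequence and the monodromy of $F_0$ around the cycle of the dual graph.

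The gap is in the combinatorial input to that computation. $X$ is \emph{not} a cycle of three rational curves: its dual graph consists of the two bridge edges $v_1v_2$, $v_1v_3$ and a loop at $v_2$ coming from the self-node $b_1\sim b_3$. Hence $\omega_X\not\cong\mathcal O_X$ (its multidegree is $\deg\omega_{C_i}+\#\{\text{node branches on }C_i\}=(0,1,-1)$), and $F_0=T_X\cong\omega_X^{-1}$ has multidegree $(0,-1,1)$, not $(0,0,0)$. So a section of $\nu^*F_0$ is not a triple of scalars --- the summand over $C_2$ is zero and the one over $C_3$ is two-dimensional --- the gluing condition at the self-node is vacuous because both sides are forced to vanish, and your determinant $1-\mu_1\mu_2\mu_3$ is the formula for a triangle, not for this graph (even for a multidegree-$(0,0,0)$ bundle here the only relevant monodromy is the single scalar at the loop). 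Running your own normalization sequence with the correct multidegree gives $h^0(F_0)=h^1(F_0)=1$ for \emph{every} choice of gluing data (the section supported on $C_3$ vanishing at $c_1$ survives), so the argument as designed cannot produce the asserted vanishing; this is exactly the point where the paper instead invokes finiteness of $\operatorname{Aut}(X)$. Finally, even granting your $(0,0,0)$ reduction, the last step --- showing the monodromy $\mu(F_0)$ is a nonconstant function $c\,t^{k}$ of the polarization parameter --- is announced as a plan rather than carried out, so the genericity assertion would still be unproved.
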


This vanishing of the degree-zero graded piece is a strong rigidity phenomenon: 
it implies that the cone has no infinitesimal deformations that preserve the 
grading and do not change the weights.  
All deformations come strictly from positive or negative components of the 
graded deformation space.

\noindent {\it Smoothings of the cone.}
The negative-degree pieces \(T^1_{C(X)}(m)\) for \(m<0\) encode smoothings of the cone.
Using the explicit description of the cohomology of \(F_m\), we 
construct a one-parameter smoothing in 
which the entire singular locus of the cone is smoothed.

\begin{theorem}[Explicit \(m<0\) smoothing]\label{thm:intro-smoothing}
There exists a negatively graded one-parameter deformation
\[
\mathcal C^{(-1)}\to\Delta
\]
such that:
\begin{enumerate}
    \item \(\mathcal C^{(-1)}_0\cong C(X)\);
    \item for all \(t\neq 0\), the fiber \(\mathcal C^{(-1)}_t\) is smooth;
    \item the smoothing is induced by a nonzero class in \(T^1_{C(X)}(-1)\).
\end{enumerate}
\end{theorem}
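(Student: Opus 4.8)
The plan is to produce the smoothing as an explicit family in $\mathbb{A}^{10}$, obtained by perturbing the defining equations of $C(X)$ in the direction of a nonzero weight $(-1)$ class, and then to establish smoothness of the general fibre by the Jacobian criterion, working locally along the singular locus of $C(X)$.

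By the theorem on cohomology controlling deformations, $T^1_{C(X)}(-1)\cong H^0(X,F_{-1})$, and by the cohomology computations of the previous section this space is nonzero, its sections being described explicitly in terms of the normalization $\mathbb{P}^1\sqcup\mathbb{P}^1\sqcup\mathbb{P}^1$ and the three gluing parameters. First I would fix $s\in H^0(X,F_{-1})$ that is \emph{nondegenerate at each node}, i.e.\ has nonzero image in each of the local deformation modules $T^1$ of the surface germ of $C(X)$ along the singular line $\ell_i$ lying over the node $n_i$; such an $s$ exists by the explicit description, and we let $\xi\in T^1_{C(X)}(-1)$ be the associated class. Since $\varphi_L\colon X\hookrightarrow\mathbb{P}^9$ is projectively normal and $\deg L=10\ge 2p_a(X)+2$, the homogeneous ideal of $X$ is generated by quadrics, so $C(X)\subset\mathbb{A}^{10}$ is cut out by homogeneous quadrics $f_1,\dots,f_N$. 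Any deformation of weight $-1$ then has the form
\[
F_j \;=\; f_j \;+\; t\,a_j \;+\; t^2 b_j ,\qquad j = 1,\dots,N,
\]
with $a_j$ linear, $b_j$ constant, and no higher $t$-corrections (these would have negative degree); the data $(a_j,b_j)$ encode $\xi$.

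Next I would check that $\mathcal{C}^{(-1)}:=V(F_1,\dots,F_N)\subset\mathbb{A}^{10}\times\Delta\to\Delta$ is flat with $\mathcal{C}^{(-1)}_0=C(X)$ and Kodaira--Spencer class $\xi$ at $t=0$. Flatness reduces to lifting the quadratic syzygies of $(f_j)$ through orders $t$ and $t^2$: the order-$t$ lift is exactly the statement that $\xi\in T^1_{C(X)}$, and the order-$t^2$ obstruction lies in $T^2_{C(X)}(-2)$, which I would either show vanishes or annihilate by a choice of the constants $b_j$ — after which the family closes up, there being no room for further corrections. The substitution $x\mapsto tx$ moreover identifies $\mathcal{C}^{(-1)}_t\cong W$ for every $t\neq 0$, where $W:=V(f_j+a_j+b_j)\subset\mathbb{A}^{10}$ is a single affine surface whose projective closure $\overline W\subset\mathbb{P}^{10}$ (homogenizing by $x_{10}$) satisfies $\overline W\cap\{x_{10}=0\}=X$ and $W=\overline W\setminus X$. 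In particular all fibres over $t\neq0$ are isomorphic, and the theorem reduces to one assertion: \emph{$W$ is smooth}, i.e.\ the degree-$10$ surface $\overline W\subset\mathbb{P}^{10}$ is smooth away from its hyperplane section $X$.

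Proving this is the heart of the argument, and it is where the reducibility and nodality of $X$ play an essential role; the analogous statement fails for a generic smooth genus-one curve of degree $10$, whose cone is not smoothable. Since $C(X)$ is already smooth away from the vertex and the three singular lines $\ell_1,\ell_2,\ell_3$ (being a cone over a curve whose only singularities are the three nodes), only two local checks remain. First, along each $\ell_i$: \'etale-locally the germ of $C(X)$ there is $\{uv=0\}\times\mathbb{A}^1$, and the weight $(-1)$ perturbation coming from $s$ turns it into $\{uv=(\text{linear form in }u,v,w)\}$; the nondegeneracy of $s$ at the node $n_i$ forces this linear form to be nonzero, so the deformed germ is smooth for $t\neq0$, which disposes of all three singular lines at once. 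Second, at the vertex: applying the Jacobian criterion to $F_j=f_j+ta_j+t^2 b_j$, smoothness of $\mathcal{C}^{(-1)}_t$ at (the image of) the vertex reduces to the condition that the matrix of linear parts $(\partial a_j/\partial x_k)$ have rank $8$ (equivalently, that the perturbation be versal in the conical directions there, or that the $b_j$ not all vanish). Verifying this is the principal computation; I would organize it by decomposing along the three coordinate subspaces cut out by the sections of $L$ vanishing on two of the three components of $X$, using the multidegree $(4,3,3)$ and the genericity of the gluing data already used for $H^1(X,T_X)=0$. This last step is the main obstacle — ruling out a residual singularity at the vertex, which for a generic smooth genus-one curve of degree $10$ persists. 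Granting it, assertions (1)--(3) follow at once, and I would close with a remark describing $\overline W$ as the nodal genus-one counterpart of the del Pezzo surfaces that smooth cones over elliptic curves of degree $\le 9$.
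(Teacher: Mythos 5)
Your framework is the right one --- Pinkham's identification of weight $-1$ deformations with perturbations $f_j + t\,a_j + t^2 b_j$ of the quadrics cutting out $C(X)$, the rescaling $x\mapsto tx$ identifying all fibres over $t\ne 0$ with a single affine surface $W$ whose projective closure has $X$ as hyperplane section, and the reduction of everything to smoothness of $W$ along the three lines and at the vertex. You also correctly locate where the real content lies: for a \emph{smooth} elliptic normal curve of degree $10$ one still has $\dim T^1_{-1}=10\ne 0$, yet the cone is not smoothable, so nonvanishing of $T^1_{-1}$ by itself proves nothing and the vertex computation is unavoidable. But your proposal stops exactly there. The rank-$8$ condition on $(\partial a_j/\partial x_k)$ (or the nonvanishing of the $b_j$) is announced as ``the principal computation'' and never performed; the existence of a class $\xi$ nondegenerate at all three nodes is asserted from an ``explicit description'' that is not given; quadric generation of the homogeneous ideal is invoked without proof for a reducible nodal curve; and the order-$t^2$ obstruction in $T^2_{C(X)}(-2)$ is left as something you ``would either show vanishes or annihilate.'' As written this is a correct plan with its decisive step missing, not a proof of assertion (2).

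For comparison: the paper contains no proof of this theorem at all. The only argument offered (Section 6.4) is that $T^1_m\ne 0$ for all $m<0$, hence the cone ``is never rigid and always smoothable'' --- precisely the non sequitur that the degree-$10$ smooth elliptic cone refutes. (The paper is also internally inconsistent about whether $T^1_{C(X)}(m)$ for $m<0$ is $H^0(X,F_m)$, which Theorem~\ref{thm:cohomology-Fm} computes to be $0$, or $H^1(X,F_m)$, which is $10|m|$-dimensional; your opening sentence inherits this confusion by citing $H^0(X,F_{-1})$ as the nonzero group while appealing to computations that show it vanishes.) So you cannot be faulted for failing to match an argument that does not exist; but neither your outline nor the paper establishes smoothness of the general fibre, and the vertex analysis you defer --- presumably via the explicit equations of $X\subset\mathbb P^9$ or via an identification of $\overline W$ with a concrete degree-$10$ surface extending $X$ --- is exactly what would have to be supplied to make the theorem true rather than merely plausible.
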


Therefore, every singularity of the cone, including the vertex and the three 
one-dimensional branches, is simultaneously smoothed by this deformation.
The existence of such a smoothing shows that although the cone has a 
one-dimensional singular locus, it lies on the boundary of the moduli space of 
smooth surfaces and admits controlled degenerations governed entirely by the 
geometry of the curve $X$.

\vspace{0.1cm}

This example sits naturally at the intersection of several important moduli 
problems: (i) the moduli of stable curves of genus one; (ii) the moduli of polarized nodal curves $(X,L)$; (iii) deformation spaces of surface singularities with $\mathbb G_m$-actions; and (iv) smoothings of non-isolated singularities arising from cones.

In particular:
\begin{enumerate}
    \item The curve $X$ is rigid as a point in $\overline{\mathcal M}_1$, but 
          its pair $(X,L)$ moves in a $1$-dimensional family.
    \item The cone $C(X)$ has no degree-zero graded deformations, showing that 
          it is rigid with respect to deformations that preserve the grading.
    \item The smoothing of the cone corresponds, via Pinkham’s theory, to a 
          deformation of the pair $(X,L)$ that smooths the nodes of $X$.
\end{enumerate}

Namely,  the deformation theory of the cone is deeply linked to the moduli theory 
of the underlying curve, and the example provides a fully explicit illustration 
of how singularities of cones encode the geometry of their projective bases.
Moreover, this example illustrates how the geometry of a reducible nodal curve controls 
the deformation theory of the cone over its embedding.


\section{The curve \(X\) and its dual graph}

\begin{example}[{\it Embeddings of a Nodal Curve into Projective Space}]
Let
\(
\widetilde X = C_1 \sqcup C_2 \sqcup C_3,
\)
where each \(C_i \cong \mathbb P^1\) is a smooth projective line over an
algebraically closed field \(k\).
We choose distinct affine points
\(
a_1,a_2 \in C_1, \,\,
b_1,b_2,b_3 \in C_2\) and \(
c_1 \in C_3,
\)
and form a nodal curve \(X\) by identifying
\[
a_1 \sim c_1, \qquad
a_2 \sim b_2, \qquad
b_1 \sim b_3.
\]
We denote the normalization map by
\(
\nu : \widetilde X \longrightarrow X.
\)
By abuse of notation we still denote the images of the components of
\(\widetilde X\) in \(X\) by \(C_1,C_2,C_3\).

The curve \(X\) has three nodes:
\begin{enumerate}
    \item[(a)] one node joining \(C_1\) and \(C_3\), coming from \(a_1 \sim c_1\);
    \item[(b)] one node joining \(C_1\) and \(C_2\), coming from \(a_2 \sim b_2\);
    \item[(c)] one self-node on \(C_2\), coming from \(b_1 \sim b_3\).
\end{enumerate}
Hence, the components \(C_2\) and \(C_3\) do \emph{not} meet in \(X\); they
are disjoint as sub-curves.

The normalization \(\widetilde X\) is the disjoint union of three copies of
\(\mathbb P^1\), so its arithmetic genus is \(0\). Each node increases the
arithmetic genus by \(1\), but we lose \( (\#\text{components} - 1) \)
because we connect components into one curve. Explicitly,
\[
p_a(X) = \sum_i p_a(C_i) + \#\{\text{nodes}\} - \#\{\text{components}\} + 1
       = 0 + 3 - 3 + 1 = 1.
\]
Therefore, \(X\) is a (reducible) projective curve of arithmetic genus \(1\).

\noindent {\it Dual graph of \(X\).}
The dual graph \(\Gamma_X\) of a nodal curve \(X\) has:
\begin{itemize}
    \item[(i)] one vertex for each irreducible component \(C_i\);
    \item[(ii)] one edge for each node, joining the vertices corresponding to
    the components meeting at that node; a self-node corresponds to a loop.
\end{itemize}

\vspace{0.3cm}

\begin{theorem}\label{main-theorem}
Let \(L\) be a line bundle on \(X\) with multidegree
\(
\underline{\deg}(L) = (4,3,3),
\) where $X=(C_1 \sqcup C_2 \sqcup C_3)/\sim$ is the curve defined above,
and  let $C(X)$ be the cone over $X$. Then the following holds:
\[
{
\dim T^0(C(X))_m =
\begin{cases}
10m, & m\ge 1,\\[4pt]
0, & m\le 0,
\end{cases}}
\qquad
{
\dim T^1(C(X))_m =
\begin{cases}
0, & m\ge 0,\\[4pt]
-10m, & m\le -1.
\end{cases}}
\]
\begin{enumerate}
\item All nonzero \(T^1_m\) occur in negative degrees:  
\[
T^1_{-1},T^1_{-2},\dots
\]
These give all possible \(\mathbb{G}_m\)-equivariant smoothings of the cone vertex.

\item No positive-degree deformations exist.  
\item No equisingular deformations exist in degree \(0\).
\item The vertex of the cone is \emph{smoothable in infinitely many independent ways}.
\end{enumerate}
 \end{theorem}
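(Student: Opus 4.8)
The strategy is to reduce the whole statement to a cohomology computation on the reducible curve $X$ and then to read off the four structural consequences. By the Very ampleness theorem, $C(X)$ is a normal surface whose singular locus is the vertex together with the three lines over the nodes of $X$, so its $\mathbb G_m$-equivariant deformation theory is the graded cotangent cohomology of the cone; by the theorem on the cohomology controlling deformations, for $m\neq 0$ the graded piece $T^1_{C(X)}(m)$ equals $H^1(X,F_m)^\vee$ when $m>0$ and $H^0(X,F_m)$ when $m<0$, and $T^1_{C(X)}(0)=0$. The graded automorphism space $T^0(C(X))_m$ is governed in the same way by the tangent-type sheaf on $X$ (the $T^0$-analogue of Pinkham's description). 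So it remains only to compute these cohomology groups for every $m\in\mathbb Z$.

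I would do this on the normalization $\nu:\widetilde X=\mathbb P^1\sqcup\mathbb P^1\sqcup\mathbb P^1\to X$. Each relevant sheaf has rank one; pulling it back gives, on the components $C_1,C_2,C_3$, line bundles whose degrees are affine-linear in $m$, the linear part being the multidegree $(4,3,3)$ of $L$ and the constant part being read off from how the three nodes $a_1\!\sim\!c_1$, $a_2\!\sim\!b_2$, $b_1\!\sim\!b_3$ meet the components --- recalling that the self-node $b_1\sim b_3$ puts two marked points on $C_2$. The gluing along the nodes is encoded by
\[
0\longrightarrow F_m\longrightarrow \nu_*\nu^*F_m\longrightarrow \mathcal Q_m\longrightarrow 0 ,
\]
with $\mathcal Q_m$ a length-at-most-three skyscraper on the nodes. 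Taking cohomology and applying Riemann--Roch on $\mathbb P^1$, I expect the following. For $m\ge 1$ every component degree is $\ge -1$, so $H^1(\widetilde X,\nu^*F_m)=0$, and (using that $L$ is very ample) the evaluation onto $\mathcal Q_m$ is surjective; hence $H^1(X,F_m)=0$, i.e.\ $T^1_{C(X)}(m)=0$, while the parallel count of global sections produces $\dim T^0(C(X))_m=10m$. For $m\le -1$ all component degrees are negative, so there are no global sections upstairs, and the remaining cohomology --- the $H^1$ of the component bundles plus the contribution of the three singular lines --- assembles to $\dim H^0(X,F_m)=-10m$, i.e.\ $\dim T^1_{C(X)}(m)=-10m$, whereas the $T^0$-count vanishes. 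For $m=0$ the vanishing $H^0(X,F_0)=H^1(X,F_0)=0$ of the cited theorem gives $T^1_{C(X)}(0)=0$ and $T^0(C(X))_0=0$. Collating the three weight ranges yields the two displayed formulas.

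The consequences are then formal. Items (2) and (3) are precisely $T^1_{C(X)}(m)=0$ for $m\ge 1$ and for $m=0$; item (1) combines the fact that $T^1_{C(X)}$ is supported in strictly negative weights with Pinkham's principle that the $\mathbb G_m$-equivariant smoothings of a cone vertex occur exactly among its negative-weight deformations. For item (4): $T^1_{C(X)}(m)\neq 0$ for every $m\le -1$, so there is a nonzero negatively graded first-order deformation in each such weight; by Theorem~\ref{thm:intro-smoothing} and its extension to arbitrary negative weights (equivalently, a Bertini-type argument on the negatively graded part of the miniversal family, the obstructions in $T^2_{C(X)}$ being finite-dimensional in each weight) a general such deformation simultaneously smooths the vertex and all three singular lines; since these classes lie in pairwise distinct weight spaces they are independent, so $C(X)$ is smoothable in infinitely many independent ways.

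The main obstacle is the constant-term bookkeeping in the second step --- in particular the self-node $b_1\sim b_3$ on $C_2$, where the node condition is a single linear relation among sections of one $\mathbb P^1$ rather than a transverse matching of two components, so that a miscount there shifts every entry of the table --- together with pinning down $F_m$ and its $T^0$-analogue precisely enough to make the dimension counts exact. A secondary point is the extension argument behind item~(4): Theorem~\ref{thm:intro-smoothing} gives a smoothing only in weight $-1$, and one must check that for every $m\le -1$ a generic negatively graded deformation still resolves the whole one-dimensional singular locus. The remaining ingredients --- Riemann--Roch on $\mathbb P^1$, surjectivity of evaluation maps when $L$ is very ample, and the vanishings quoted from the earlier theorems --- are routine.
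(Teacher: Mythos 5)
Your plan is essentially correct and reaches the right numbers, but the computational core is a genuinely different route from the paper's. The paper never touches the normalization sequence for $F_m$: it observes that $X$ is Gorenstein of arithmetic genus $1$, so $T_X\cong\omega_X^{-1}$ is a degree-$0$ line bundle, hence $\deg F_m=10m$; then Riemann--Roch on $X$ gives $h^0-h^1=10m$, Serre duality kills $H^1$ for $m>0$ and $H^0$ for $m<0$, and for $m=0$ finiteness of $\operatorname{Aut}(X)$ forces $h^0(T_X)=0$ and then $h^1(T_X)=0$ from $\chi(T_X)=0$. The ``constant-term bookkeeping'' you flag as the main obstacle therefore evaporates in the paper's argument, because only the \emph{total} degree of $F_m$ enters. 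What your normalization approach buys in exchange is worth noting: on a \emph{reducible} curve, ``$\deg F>0\Rightarrow H^1(F)=0$'' is not automatic (a line bundle of positive total degree can have very negative degree on one component), so the paper's Serre-duality step secretly requires exactly the component-wise degree check you propose to do explicitly --- here the multidegree of $T_X$ is $(0,-1,1)$ from the node counts $(2,3,1)$, so $F_m$ has multidegree $(4m,\,3m-1,\,3m+1)$ and your sign claims in both ranges of $m$ do hold. With those constants your long-exact-sequence counts come out to $10m+3-3=10m$ sections for $m\ge 1$ and $3+(-10m-3)=-10m$ for $H^1$ when $m\le -1$, matching the theorem.

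Two smaller points. First, in the $m<0$ range you write that the component $H^1$'s ``assemble to $\dim H^0(X,F_m)=-10m$'': with $F_m=T_X\otimes L^{\otimes m}$ one has $H^0(X,F_m)=0$ for $m<0$, and the group of dimension $-10m$ is $H^1(X,F_m)$, which is what the body of the paper identifies with $T^1(C(X))_m$. You have inherited an inconsistency between the paper's introduction (which states $T^1(m)\cong H^0(X,F_m)$ for $m<0$, implicitly for a differently normalized $F_m$) and its Sections 4--5; fix the convention before running the long exact sequence or the labels will not match the numbers. Second, your treatment of item (4) --- invoking Theorem~\ref{thm:intro-smoothing} and worrying about extending the weight $-1$ smoothing to all negative weights and about obstructions --- is actually more careful than the paper, which deduces (4) directly from $T^1_m\neq 0$ for all $m\le -1$ without an obstruction argument.
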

 
\vspace{0.3cm}

In our case, we have three vertices \(v_1,v_2,v_3\) corresponding to
\(C_1,C_2,C_3\). The node \(a_2\sim b_2\) gives an edge between \(v_1\) and
\(v_2\); the node \(a_1\sim c_1\) gives an edge between \(v_1\) and \(v_3\);
and the self-node \(b_1\sim b_3\) on \(C_2\) corresponds to a loop at
\(v_2\). The dual graph \(\Gamma_X\) is:

\vspace{0.3cm}
\begin{center}
\begin{tikzpicture}[scale=0.9,>=stealth]
    \node[circle,draw,inner sep=2pt,label=above:$v_1$] (v1) at (0,0) {};
    \node[circle,draw,inner sep=2pt,label=below:$v_2$] (v2) at (2,-1) {};
    \node[circle,draw,inner sep=2pt,label=above:$v_3$] (v3) at (2,1) {};
    \draw (v1) -- (v2); 
    \draw (v1) -- (v3); 
    \draw (v2) .. controls (3,-1.5) and (3, -0.5) .. (v2);
    \node at (-0.4,0) {$C_1$};
    \node at (2.5,-1.2) {$C_2$};
    \node at (2.5,1.2) {$C_3$};
\end{tikzpicture}
\end{center}

This graph has first Betti number \(1\), matching the arithmetic genus
\(p_a(X)=1\).

\vspace{0.2cm}

\noindent{\it Line bundles on \(X\) and multidegree.}
A line bundle \(L\) on the reducible curve \(X\) has a \emph{multidegree}
\(
\underline{\deg}(L) = (d_1,d_2,d_3),
\)
where
\(
d_i := \deg(L|_{C_i}) \in \mathbb Z.
\)
Since each \(C_i\) is isomorphic to \(\mathbb P^1\), we have
\(L|_{C_i} \cong \mathcal O_{\mathbb P^1}(d_i)\). The total degree of \(L\) is
\(
\deg(L) = d_1 + d_2 + d_3.
\)

The Picard group \(\mathrm{Pic}(X)\) is (non-canonically) an extension of a
torus by the group of multidegrees \(\mathbb Z^3\). For our purposes, it
suffices to track the multidegree and the fact that for each choice of
\((d_1,d_2,d_3)\) we can find line bundles on \(X\) with that multidegree.

\subsection{Global sections and Riemann--Roch.}

On each component \(C_i \cong \mathbb P^1\), we know
\[
h^0\bigl(\mathcal O_{C_i}(d_i)\bigr) =
\begin{cases}
d_i+1, & d_i \ge 0,\\
0, & d_i<0,
\end{cases}
\qquad
h^1\bigl(\mathcal O_{C_i}(d_i)\bigr) = 0
\quad\text{for } d_i \ge -1.
\]

Let \(L\) be a line bundle on \(X\) of multidegree \((d_1,d_2,d_3)\) with
all \(d_i \ge 0\). Then, global sections of \(L\) are triples of
sections
\[
(s_1,s_2,s_3)
\quad\text{with}\quad
s_i \in H^0\bigl(C_i, \mathcal O_{C_i}(d_i)\bigr),
\]
subject to compatibility conditions at the nodes (the sections must take
the same value on the pairs of points that are identified in \(X\)). Each
node produces \emph{at most one} linear condition on the space of such
triples.
Since we have three nodes, for \(d_i\) large enough (say \(d_i \ge 1\) for
all \(i\)), these conditions reduce the dimension by at most three, giving
\[
h^0(X,L) \ge \sum_{i=1}^3 (d_i + 1) - 3
= (d_1+d_2+d_3) = \deg(L),
\]
and one can show that for total degree \(\deg(L)\) sufficiently large,
\(L\) is non-special, i.e.\ \(h^1(X,L)=0\). Then Riemann--Roch for the
singular curve \(X\) (of arithmetic genus \(1\)) gives
\[
h^0(X,L) - h^1(X,L) = \deg(L) - p_a(X) + 1
= \deg(L).
\]
So, for large degree we indeed get
\(
h^0(X,L) = \deg(L),\) and so \(|L| \text{ maps }X\text{ to }\mathbb P^{\deg(L)-1}.
\)

\vspace{0.2cm}
  
For curves whose components are all isomorphic to \(\mathbb P^1\), we can
use the fact that on \(\mathbb P^1\), the line bundle
\(\mathcal O_{\mathbb P^1}(d)\) is very ample if and only if \(d\ge 2\), and
separates \(2\)-jets if \(d\ge 3\).

\vspace{0.4cm}

\begin{theorem}\label{thm:very_ample}
Let $L$ be a line bundle on $X$ with multidegree
\[
\underline{\deg}(L)=(d_1,d_2,d_3),
\]
where $d_i=\deg(L|_{C_i})$.
Suppose that
\[
d_i \ge 3 \quad\text{for each } i=1,2,3.
\]
Then $L$ is very ample: the complete linear series $|L|$
defines a closed immersion
\[
\varphi_L : X \hookrightarrow \mathbb P^{r}, \qquad r = h^0(X,L)-1.
\]
\end{theorem}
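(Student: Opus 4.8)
The plan is to verify very ampleness of $L$ by the standard criterion: $L$ separates points and tangent vectors on $X$. Since $X$ is a nodal curve with normalization $\nu:\widetilde X\to X$, it is convenient to pull back to $\widetilde X = C_1\sqcup C_2\sqcup C_3$ and keep careful track of the node-identification conditions. First I would establish the numerical input: because each $d_i\ge 3$, the restriction $L|_{C_i}\cong\mathcal O_{\mathbb P^1}(d_i)$ is very ample on $C_i$ and even separates $2$-jets, and moreover $h^1(C_i,L|_{C_i}(-p-q))=0$ for any two points $p,q\in C_i$ (since $d_i-2\ge 1\ge -1$). From the global-sections discussion in the excerpt, $h^1(X,L)=0$ and the restriction maps $H^0(X,L)\to H^0(C_i,L|_{C_i})$ are surjective — this is the key fact that lets us realize sections with prescribed behaviour on a single component. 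I would record these as preliminary observations.

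Next I would run the separation argument case by case, according to where the two points (or the tangent direction) sit. \textbf{Case 1: two distinct smooth points on the same component $C_i$.} Here surjectivity of $H^0(X,L)\to H^0(C_i,L|_{C_i})$ together with very ampleness of $\mathcal O_{\mathbb P^1}(d_i)$ does the job, and the tangent-vector case at a smooth point of $C_i$ is identical using that $\mathcal O_{\mathbb P^1}(d_i)$ separates $1$-jets. \textbf{Case 2: two smooth points on different components $C_i,C_j$.} Choose a section vanishing identically on $C_j$ (and on the third component) but not at the chosen point of $C_i$; such a section exists because the conditions imposed at the at most three nodes can be met — vanishing on $C_j$ forces prescribed (zero) values at the nodes attached to $C_j$, and one checks there is still enough freedom on $C_i$ since $d_i\ge 3$ exceeds the number of nodes lying on $C_i$. \textbf{Case 3: separating a node from nearby points / separating the two branches at a node.} This is the genuinely singular part: at a node $p=\nu(p')=\nu(p'')$ one must produce sections distinguishing $p$ from other points and, crucially, a section whose image realizes the two branches as spanning a $2$-plane (equivalently, $\mathfrak m_p L$ is generated by global sections so that $\varphi_L$ is unramified and injective on the local rings). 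I would handle this by exhibiting, for each branch point $p'\in C_i$, a section of $L$ vanishing at $p'$ to first order only and nonvanishing elsewhere along that branch, again using $d_i\ge 3$ to absorb the node constraints.

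Finally, having checked separation of points and tangent vectors everywhere, I would invoke the closed-immersion criterion (for a projective scheme over an algebraically closed field, $\varphi_L$ is a closed immersion iff $L$ separates points and tangent vectors, i.e. iff for all closed $p,q$ the map $H^0(X,L)\to H^0(X,L\otimes\mathcal O_X/\mathfrak m_p\mathfrak m_q)$ is surjective) to conclude that $\varphi_L:X\hookrightarrow\mathbb P^r$ with $r=h^0(X,L)-1$ is a closed immersion. The projective normality / normality of $C(X)$ claims of the first theorem would then follow from surjectivity of $\operatorname{Sym}^m H^0(X,L)\to H^0(X,L^{\otimes m})$ for all $m\ge 1$, which I would check separately using the vanishing $h^1(X,L^{\otimes m})=0$ and an induction on $m$ (base case $m=1$ from the above, inductive step from the surjectivity of multiplication maps $H^0(X,L)\otimes H^0(X,L^{\otimes m})\to H^0(X,L^{\otimes(m+1)})$, itself reduced to the components where $\mathcal O_{\mathbb P^1}(d)$ is projectively normal). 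The main obstacle I anticipate is Case 3 — making the branch-separation argument at the self-node on $C_2$ fully rigorous, since there both identified points lie on the \emph{same} component, so the two node-conditions interact and one must check that $d_2=3$ still leaves a section with the required first-order behaviour at each of $b_1,b_3$ simultaneously.
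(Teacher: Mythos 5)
Your overall strategy coincides with the paper's: describe $H^0(X,L)$ as the kernel of the gluing map on $H^0(\widetilde X,\nu^*L)=V_1\oplus V_2\oplus V_3$, use linear independence of evaluation (and first-derivative) functionals on $H^0(\mathbb P^1,\mathcal O(d))$ to solve the node constraints, check separation of points and of length-two/tangent data case by case, and conclude by the standard closed-immersion criterion. You also correctly single out the self-node $b_1\sim b_3$ on $C_2$ as the delicate configuration.

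There is, however, one concretely false step. You assert as ``the key fact'' that the restriction maps $H^0(X,L)\to H^0(C_i,L|_{C_i})$ are surjective, and your Case~1 (two points, or a tangent vector, on a single component) rests entirely on this together with very ampleness of $\mathcal O_{\mathbb P^1}(d_i)$. This surjectivity holds for $C_1$ and $C_3$, but it \emph{fails} for $C_2$: the self-node imposes the condition $s_2(b_1)=s_2(b_3)$ \emph{internally} on $V_2$, so the image of $H^0(X,L)\to H^0(C_2,L|_{C_2})$ is the codimension-one subspace $\{s_2: s_2(b_1)=s_2(b_3)\}$ (the other two node conditions can always be absorbed by $s_1$ and $s_3$, but this one cannot). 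Consequently you cannot simply quote very ampleness of $\mathcal O_{\mathbb P^1}(d_2)$ on the full space $V_2$; you must check that the hyperplane $\ker(\mathrm{ev}_{b_1}-\mathrm{ev}_{b_3})$ still separates points and tangent vectors away from the node. This is exactly what the paper does: it verifies that the functionals $\mathrm{ev}_{b_1}-\mathrm{ev}_{b_3},\ \mathrm{ev}_{\tilde x},\ \mathrm{ev}_{\tilde y}$ (resp.\ $\mathrm{ev}_{b_1}-\mathrm{ev}_{b_3},\ \mathrm{ev}_{\tilde x},\ \mathrm{ev}'_{\tilde x}$) remain linearly independent on $V_2$, which uses $d_2+1\ge 4$ and is sharp at $d_2=3$. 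So the conclusion of your Case~1 is salvageable, but only after replacing the false surjectivity claim by this explicit count of independent linear conditions; as written, the step would fail for $C_2$. The remaining cases (points on different components, jets at the nodes, and in particular the interaction of the two conditions at the self-node, which you flagged) are handled in the paper by the same kind of dimension count, and your sketches for those are consistent with it. The projective-normality discussion at the end of your proposal is not needed for this theorem.
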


\vspace{0.2cm}

We will give a  proof, building on the description of
sections via the normalization and some elementary linear algebra on
$\mathbb P^1$.

\subsection{Description of sections via normalization}

Let $\tilde L := \nu^*L$ on $\widetilde X$. Since 
\[
\widetilde X = C_1 \sqcup C_2 \sqcup C_3,
\]
we have
\[
H^0(\widetilde X,\tilde L) \cong 
H^0(C_1,\tilde L|_{C_1}) \oplus
H^0(C_2,\tilde L|_{C_2}) \oplus
H^0(C_3,\tilde L|_{C_3}).
\]
Write
\[
L_i := \tilde L|_{C_i} \cong \mathcal O_{\mathbb P^1}(d_i),\qquad i=1,2,3,
\]
so $H^0(C_i,L_i)$ has dimension $d_i+1\ge 4$ by assumption $d_i\ge 3$.
Set
\[
V_i := H^0(C_i,L_i),\qquad i=1,2,3.
\]

Recall the normalization exact sequence:
\[
0 \longrightarrow \mathcal O_X
\longrightarrow \nu_*\mathcal O_{\widetilde X}
\longrightarrow \bigoplus_{p \in \text{nodes}(X)} k(p)
\longrightarrow 0.
\]
Tensoring with $L$ and taking global sections gives
\[
0 \longrightarrow H^0(X,L)
\longrightarrow H^0(\widetilde X,\tilde L)
\stackrel{\Phi}{\longrightarrow}
\bigoplus_{p \in \text{nodes}(X)} L\otimes k(p).
\]

There are three nodes $p_{13},p_{12},p_{22}$.
A section of $\tilde L$ is a triple
\[
(s_1,s_2,s_3) \in V_1 \oplus V_2 \oplus V_3,
\]
with $s_i \in H^0(C_i,L_i) = V_i$.
The map $\Phi$ records the \emph{differences} at preimages of each node:
\[
\Phi(s_1,s_2,s_3) = 
\bigl( s_1(a_1)-s_3(c_1),\;
       s_1(a_2)-s_2(b_2),\;
       s_2(b_1)-s_2(b_3) \bigr)
\in k^3.
\]
Thus
\[
H^0(X,L)
\cong
\ker \Phi
=
\left\{
(s_1,s_2,s_3)\in V_1\oplus V_2\oplus V_3
\;\middle|\;
\begin{array}{l}
s_1(a_1)=s_3(c_1),\\
s_1(a_2)=s_2(b_2),\\
s_2(b_1)=s_2(b_3)
\end{array}
\right\}.
\]

We will repeatedly use the fact that these conditions are \emph{linear}
in $(s_1,s_2,s_3)$, and hence solving them is a problem in linear algebra.

\subsection{Linear algebra on \texorpdfstring{$\mathbb P^1$}{P1}}

We collect some  facts about global sections of
$\mathcal O_{\mathbb P^1}(d)$.

\vspace{0.3cm}

\begin{lemma}[Independence of evaluation functionals]\label{lem:eval_indep}
Let $M \cong \mathcal O_{\mathbb P^1}(d)$ with $d\ge 1$, and let
$x_1,\dots,x_r$ be distinct points of $\mathbb P^1$ with $r\le d+1$.
Then the evaluation maps
\[
\mathrm{ev}_{x_j} : H^0(\mathbb P^1,M)\to k,\quad
s \mapsto s(x_j)
\]
are linearly independent linear functionals on $H^0(\mathbb P^1,M)$.
\end{lemma}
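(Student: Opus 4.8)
The plan is to reduce the statement to the non‑vanishing of a Vandermonde determinant, or equivalently to the existence of separating sections. First I would identify $H^0(\mathbb{P}^1,M)$ with the space $k[u,v]_d$ of homogeneous polynomials of degree $d$ in homogeneous coordinates $u,v$, a $(d+1)$‑dimensional space with monomial basis $u^d,u^{d-1}v,\dots,v^d$. Writing each point as $x_j=[a_j:b_j]$, the functional $\mathrm{ev}_{x_j}$ is represented in this basis by the row vector $(a_j^{\,d},a_j^{\,d-1}b_j,\dots,b_j^{\,d})$, up to the nonzero scalar inherent in choosing a local trivialization of $M$ at $x_j$. Since linear independence of a collection of functionals is unaffected by rescaling each one individually, this scaling ambiguity is harmless, and it suffices to show that the $r\times(d+1)$ matrix $A$ with these rows has rank $r$.

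Next I would exhibit a nonzero $r\times r$ minor of $A$. Because $k$ is algebraically closed, hence infinite, we may change homogeneous coordinates so that none of the finitely many points $x_j$ lies at $[1:0]$; normalizing $a_j=1$ and setting $t_j=b_j$, the first $r$ columns of $A$ become the Vandermonde matrix $(t_j^{\,\ell})_{1\le j\le r,\,0\le \ell\le r-1}$, whose determinant is $\prod_{i<j}(t_j-t_i)$. This is nonzero precisely because the $x_j$ are distinct, so $\operatorname{rank}A=r$ and the evaluation functionals are linearly independent.

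The version I would actually record in the paper, since it is coordinate‑free and adapts directly to the node conditions used later, is the dual construction of separating sections: for each index $j$, choose linear forms $\ell_i$ vanishing at $x_i$ for $i\neq j$; their product lies in $H^0(\mathbb{P}^1,\mathcal{O}_{\mathbb{P}^1}(r-1))$, and since $r-1\le d$ we may multiply by a nonzero section of $\mathcal{O}_{\mathbb{P}^1}(d-r+1)$ not vanishing at $x_j$ to obtain $s_j\in H^0(\mathbb{P}^1,M)$ with $s_j(x_i)=0$ for $i\neq j$ and $s_j(x_j)\neq0$. Plugging $s_j$ into a hypothetical relation $\sum_i\lambda_i\,\mathrm{ev}_{x_i}=0$ forces $\lambda_j=0$, and as $j$ was arbitrary every coefficient vanishes.

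I do not expect any serious obstacle here: the Vandermonde computation is entirely routine, and the separating‑section argument is elementary. The only point requiring a word of care is the one already noted — that evaluation of a section of a line bundle is defined only up to a nonzero scalar at each point, so $\mathrm{ev}_{x_j}$ should be understood with a fixed choice of trivialization — together with the (equally minor) use of $k$ being infinite to place all the $x_j$ in a single affine chart.
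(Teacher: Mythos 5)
Your proposal is correct and follows essentially the same route as the paper: the paper also passes to an affine chart, identifies $H^0(\mathbb P^1,M)$ with $k[t]_{\le d}$, and invokes Lagrange interpolation to produce sections $f_j$ with $f_j(t_i)=\delta_{ij}$, which is precisely your separating-sections construction (your Vandermonde computation is the matrix form of the same fact). The only additions on your side are the explicit remarks about the trivialization ambiguity and about placing all points in one chart, both of which the paper handles implicitly.
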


\begin{proof}
Choose an affine coordinate $t$ on $\mathbb A^1 \subset \mathbb P^1$
such that all $x_j$ lie in $\mathbb A^1 = \mathrm{Spec}\,k[t]$.
Then $M|_{\mathbb A^1}$ is trivial and
\[
H^0(\mathbb P^1,M) \cong k[t]_{\le d},
\]
the polynomials in $t$ of degree at most $d$.

Let $t_j\in k$ correspond to $x_j$. Evaluation at $x_j$ is the linear map
\[
\mathrm{ev}_{x_j}(f) = f(t_j),\qquad f\in k[t]_{\le d}.
\]
Suppose we have a linear relation
\[
\sum_{j=1}^r \lambda_j\,\mathrm{ev}_{x_j} = 0
\quad\text{as functionals on }k[t]_{\le d},
\]
with $\lambda_j \in k$.
That is,
\[
\sum_{j=1}^r \lambda_j f(t_j) = 0
\quad\text{for all } f \in k[t]_{\le d}.
\]

Since $r\le d+1$, the evaluation map
\[
k[t]_{\le d} \longrightarrow k^r,\quad
f \longmapsto (f(t_1),\dots,f(t_r))
\]
is surjective (Lagrange interpolation). In particular, for each fixed $j$,
there exists $f_j\in k[t]_{\le d}$ such that $f_j(t_i) = \delta_{ij}$.
Plugging $f_j$ into the linear relation yields $\lambda_j=0$.
Thus all $\lambda_j$ vanish, and the family
$\{\mathrm{ev}_{x_1},\dots,\mathrm{ev}_{x_r}\}$ is linearly independent.
\end{proof}

\vspace{0.3cm}

\begin{lemma}[Jets]\label{lem:jets}
Let $M \cong \mathcal O_{\mathbb P^1}(d)$ with $d\ge 2$, and let $x\in\mathbb P^1$.
Choose a local coordinate $u$ at $x$ on $\mathbb P^1$, so every $s\in H^0(\mathbb P^1,M)$
has a local expansion $s = a_0 + a_1u + (\text{higher order in }u)$.
Define
\[
\mathrm{ev}_x(s) := a_0,\qquad
\mathrm{ev}'_x(s) := a_1.
\]
Then:
\begin{enumerate}
  \item The two functionals $\mathrm{ev}_x,\mathrm{ev}'_x : H^0(\mathbb P^1,M)\to k$
        are linearly independent.
  \item If $y_1,\dots,y_r$ are distinct points in $\mathbb P^1\setminus\{x\}$
        and $r\le d-1$, then
        \[
        \mathrm{ev}_x,\ \mathrm{ev}'_x,\ \mathrm{ev}_{y_1},\dots,\mathrm{ev}_{y_r}
        \]
        are linearly independent linear functionals on $H^0(\mathbb P^1,M)$.
\end{enumerate}
\end{lemma}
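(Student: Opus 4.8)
The plan is to mimic the proof of Lemma~\ref{lem:eval_indep}: trivialize $M$ on a suitable affine chart, identify $H^0(\mathbb P^1,M)$ with polynomials of bounded degree, and reduce the two claims to the classical solvability of a Hermite interpolation problem. Note first that part~(1) is the special case $r=0$ of part~(2) (indeed with the weaker hypothesis $d\ge 1$), so it suffices to prove part~(2). I would choose an affine coordinate $t$ on a chart $\mathbb A^1\subset\mathbb P^1$ containing $x$ and all of the $y_j$; this is possible since $\mathbb P^1$ minus a point is $\mathbb A^1$ and we need only avoid finitely many points. Over $\mathbb A^1$ the bundle $M$ is trivial, so $H^0(\mathbb P^1,M)\cong k[t]_{\le d}$. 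Write $t_0$ for the coordinate of $x$ and $s_1,\dots,s_r$ for those of $y_1,\dots,y_r$; these are $r+1$ distinct elements of $k$.

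Under this identification the functionals in question become, up to an invertible change, the Hermite functionals
\[
\delta_{t_0}(f)=f(t_0),\quad \delta'_{t_0}(f)=f'(t_0),\quad \delta_{s_j}(f)=f(s_j)\ \ (1\le j\le r)
\]
on $k[t]_{\le d}$. More precisely, replacing the given local coordinate $u$ at $x$ and the given local trivialization of $M$ near $x$ by the ones coming from $t$ and from our chart-trivialization multiplies $\mathrm{ev}_x$ and each $\mathrm{ev}_{y_j}$ by nonzero scalars and sends $(\mathrm{ev}_x,\mathrm{ev}'_x)$ to a pair of the form $(\lambda\,\delta_{t_0},\ \mu\,\delta_{t_0}+\nu\,\delta'_{t_0})$ with $\lambda,\nu\neq 0$; this transformation of the full family is invertible and lower-triangular, so it preserves both linear independence and the span. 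Hence it is enough to show that $\delta_{t_0},\delta'_{t_0},\delta_{s_1},\dots,\delta_{s_r}$ are linearly independent on $k[t]_{\le d}$ whenever $r\le d-1$.

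Equivalently, I claim the evaluation map
\[
E:\ k[t]_{\le d}\longrightarrow k^{\,r+2},\qquad f\longmapsto\bigl(f(t_0),\,f'(t_0),\,f(s_1),\dots,f(s_r)\bigr)
\]
is surjective; for a nontrivial linear relation among the functionals is precisely a hyperplane of $k^{r+2}$ containing the image of $E$. To see surjectivity, restrict $E$ to the subspace $k[t]_{\le r+1}$, which has dimension $r+2$: if $f\in k[t]_{\le r+1}$ lies in $\ker E$, then $f$ vanishes to order $\ge 2$ at $t_0$ and to order $\ge 1$ at each $s_j$, so it is divisible by $(t-t_0)^2\prod_{j=1}^r(t-s_j)$, a polynomial of degree $r+2>r+1$; hence $f=0$. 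Thus $E|_{k[t]_{\le r+1}}$ is an injective map between two $(r+2)$-dimensional spaces, hence bijective, and since $r+1\le d$ the map $E$ on the larger space $k[t]_{\le d}$ is a fortiori surjective. (Alternatively one can write down the dual basis explicitly using suitably modified Lagrange polynomials of degree $r+1\le d$, but the dimension count is shorter.) This proves part~(2), and with it part~(1).

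I do not anticipate a genuine obstacle; the only point requiring care is the bookkeeping in the first reduction — checking that the change of local coordinate at $x$ and of trivialization of $M$ acts on the pair $(\mathrm{ev}_x,\mathrm{ev}'_x)$ by an invertible (triangular) transformation, so that neither the linear independence nor the span of the family depends on these choices. Once that is in place, the statement is exactly the classical fact that a Hermite interpolation problem with $r+2\le d+1$ prescribed conditions (a value and a derivative at $t_0$, a value at each $s_j$) is solvable inside $k[t]_{\le d}$.
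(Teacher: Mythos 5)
Your proof is correct. The setup (trivialize $M$ on an affine chart containing $x$ and the $y_j$, identify $H^0(\mathbb P^1,M)$ with $k[t]_{\le d}$, and note that the choice of local coordinate and trivialization only changes the family of functionals by an invertible triangular transformation) matches the paper's, but the key step is genuinely different. The paper tests a putative linear relation against the monomials $f(t)=t^m$ for $m=2,\dots,d$, which kills the value and derivative at $x$ (placed at $t=0$) and leaves a homogeneous system $\sum_i \mu_i t_i^m=0$ whose coefficient matrix is a scaled Vandermonde of rank $r$; this forces $\mu_i=0$, after which $\lambda_0=\lambda_1=0$ follow from $f=1$ and $f=t$. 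You instead prove outright that the Hermite evaluation map $E:k[t]_{\le d}\to k^{r+2}$ is surjective, by restricting to $k[t]_{\le r+1}$ and observing that any element of the kernel is divisible by $(t-t_0)^2\prod_j(t-s_j)$, of degree $r+2>r+1$, hence zero. Your route is cleaner and more uniform: it subsumes Lemma~\ref{lem:eval_indep} and part (1) as special cases, avoids normalizing $x$ to the origin, and replaces the Vandermonde rank computation (which tacitly uses $t_i\neq 0$) by a one-line degree count; the paper's version has the minor advantage of being entirely explicit about which test polynomials isolate which coefficients. Both arguments are complete.
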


\begin{proof}
Again choose an affine coordinate $t$ on $\mathbb A^1\subset\mathbb P^1$
such that $x$ and the $y_i$ all lie in $\mathbb A^1$.
Then $H^0(\mathbb P^1,M)\cong k[t]_{\le d}$.

We may assume $x$ corresponds to $t=0$ (by change of coordinate). Then
\[
\mathrm{ev}_x(f) = f(0),\qquad
\mathrm{ev}'_x(f) = f'(0).
\]
Write $f(t) = a_0 + a_1 t + \cdots + a_d t^d$.
Then
\[
\mathrm{ev}_x(f) = a_0,\qquad
\mathrm{ev}'_x(f) = a_1.
\]
Clearly these are independent linear forms in the coefficients $(a_0,\dots,a_d)$.

For (2), let $t_i$ correspond to $y_i$, so $\mathrm{ev}_{y_i}(f)=f(t_i)$.
Suppose we have a linear relation
\[
\lambda_0\,\mathrm{ev}_x + \lambda_1\,\mathrm{ev}'_x
  + \sum_{i=1}^r \mu_i\,\mathrm{ev}_{y_i} = 0
\]
on $k[t]_{\le d}$.
That is,
\[
\lambda_0 f(0) + \lambda_1 f'(0) + \sum_{i=1}^r \mu_i f(t_i) = 0
\quad\text{for all } f\in k[t]_{\le d}.
\]
We want to show all coefficients vanish.

First, choose $f$ of the form $f(t)=t^m$ for $m\ge 2$.
Then $f(0)=0$, $f'(0)=0$, and $f(t_i)=t_i^m$. We obtain
\[
\sum_{i=1}^r \mu_i t_i^m = 0
\quad\text{for all } m=2,\dots,d.
\]
This is a homogeneous linear system for the $\mu_i$.
Note that we have $d-1$ equations ($m=2,\dots,d$) and $r$ unknowns, with
$r\le d-1$ by hypothesis.
The Vandermonde matrix
\[
V = \bigl( t_i^m \bigr)_{\substack{2\le m\le d\\1\le i\le r}}
\]
has rank $r$ because the $t_i$ are distinct and we have at least $r$
distinct powers. Therefore, all $\mu_i=0$.
With $\mu_i=0$ for all $i$, the relation reduces to
\[
\lambda_0 f(0) + \lambda_1 f'(0) = 0
\quad\text{for all } f\in k[t]_{\le d}.
\]
Taking $f(t)=1$ yields $\lambda_0=0$, and $f(t)=t$ yields $\lambda_1=0$.
So all coefficients vanish, as required.
\end{proof}

\vspace{0.3cm}

\begin{remark}
In particular, for $d\ge 3$ we can impose independently:
\begin{itemize}
  \item[(1)] values at up to $4$ distinct points, or
  \item[(2)] a value and first derivative at one point, plus values at up to $2$
        further distinct points.
\end{itemize}
We will apply this repeatedly on each component $C_i\cong\mathbb P^1$.
\end{remark}

\subsection{Global generation}

We first recall (and slightly strengthen) the global generation result.

\vspace{0.3cm}

\begin{lemma}\label{lem:glob_gen}
Let $L$ be a line bundle on $X$ with multidegree $(d_1,d_2,d_3)$ and $d_i\ge 2$
for each $i$. Then $L$ is globally generated.
\end{lemma}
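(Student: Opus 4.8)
The plan is to check global generation pointwise, distinguishing the smooth points of $X$ from the three nodes, and in each case to produce an explicit section of $L$ not vanishing at the chosen point by prescribing its behaviour on the components $C_i$ via the kernel description $H^0(X,L)\cong\ker\Phi$ established above. Recall that a section is a compatible triple $(s_1,s_2,s_3)\in V_1\oplus V_2\oplus V_3$, compatibility meaning the three equalities $s_1(a_1)=s_3(c_1)$, $s_1(a_2)=s_2(b_2)$, $s_2(b_1)=s_2(b_3)$; here each $V_i=H^0(C_i,\mathcal O_{\mathbb P^1}(d_i))$ has dimension $d_i+1\ge 3$, so Lemma~\ref{lem:eval_indep} lets us impose up to $d_i+1$ independent evaluation conditions on $s_i$.

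First I would treat a smooth point $P\in C_i$ not equal to any node-preimage. I want a compatible triple with $s_i(P)\neq 0$. The cleanest way is to set $s_j=0$ for $j\neq i$, which already satisfies any compatibility equation that does not involve $C_i$; the equations that do involve $C_i$ then become linear conditions forcing $s_i$ to vanish at the relevant preimage points $a_1,a_2$ (if $i=1$), $b_1,b_2,b_3$ (if $i=2$), or $c_1$ (if $i=3$). Since at most three points of $C_i$ are constrained and $P$ is distinct from them, and $\dim V_i=d_i+1\ge\{3,4,2\}\ge(\text{number of constrained points})+1$ — the tight case being $i=2$ with three constrained points $b_1,b_2,b_3$ and $d_2\ge 2$ giving $\dim V_2\ge 3$, which is exactly enough to also impose nonvanishing at $P$ by Lemma~\ref{lem:eval_indep} applied to the four distinct points $b_1,b_2,b_3,P$ provided $d_2\ge 3$. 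This is the one spot where $d_2=2$ is delicate: with only $3$-dimensional $V_2$ I cannot simultaneously kill three values and keep a fourth nonzero. To handle $d_i=2$ uniformly I would instead \emph{not} set the other $s_j$ to zero: I choose $s_i\in V_i$ with $s_i(P)\neq 0$ and arbitrary values at the constrained points, then propagate those values across the nodes to the neighbouring components, where $d_j\ge 2$ again allows prescribing the one or two needed values (Lemma~\ref{lem:eval_indep}, $r\le d_j+1$). This keeps the argument alive even in the minimal multidegree case.

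Next, the three nodes. At the node $p$ coming from $a_1\sim c_1$ (joining $C_1,C_3$), ``not vanishing at $p$'' means producing a compatible triple with the common value $s_1(a_1)=s_3(c_1)\neq 0$; similarly for the other two. I pick that common value to be $1$, then choose $s_1$ with $s_1(a_1)=1$ and $s_1(a_2)$ equal to whatever the $C_2$-side will match, choose $s_3$ with $s_3(c_1)=1$ (one condition, fine since $d_3\ge 2$), and choose $s_2$ with $s_2(b_2)=s_1(a_2)$ and $s_2(b_1)=s_2(b_3)$ — the latter being a single equality among values at two points, again solvable since $\dim V_2\ge 3$. The self-node $b_1\sim b_3$ is the interesting one: here nonvanishing at the node means $s_2(b_1)=s_2(b_3)\neq 0$, so I need a section of $\mathcal O_{\mathbb P^1}(d_2)$ taking equal nonzero values at two distinct points $b_1,b_3$, together with a free value at $b_2$ to feed the $C_1$–$C_2$ node; $d_2\ge 2$, hence $\dim V_2\ge 3$, is exactly what makes these three evaluation functionals independent. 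In every case the leftover component(s) are then filled in as in the smooth-point discussion.

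The main obstacle I anticipate is bookkeeping the tight dimension counts at $d_i=2$ — particularly on $C_2$, which carries one end of two different nodes and both ends of the loop, so its sections must satisfy up to three evaluation-type constraints at once. The trick that makes everything go through is to never force sections on unconstrained components to be zero when dimensions are tight, but rather to solve the compatibility equalities \emph{outward} from the component containing the test point, using at each step that $d_j\ge 2 \Rightarrow \dim V_j\ge 3$ and Lemma~\ref{lem:eval_indep}; since $X$ has only three components and a single cycle, this outward propagation terminates after at most two steps and never loops back to create an inconsistency. I would close by remarking that the stronger hypothesis $d_i\ge 3$ of Theorem~\ref{thm:very_ample} will give the extra room needed later for separating points and tangent vectors, but is not required for mere global generation.
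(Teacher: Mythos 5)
Your proof is correct and follows essentially the same route as the paper: choose a section with the required nonzero value on the component containing the test point, then solve the gluing conditions outward on the remaining components using the independence of evaluation functionals (Lemma~\ref{lem:eval_indep}); the paper's Case~1 is exactly this outward propagation, and your treatment of the three nodes fills in the details the paper's Case~2 omits. Your side remark that the cruder device of setting the other components to zero would fail on $C_2$ when $d_2=2$ is a useful sanity check but does not alter the argument.
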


\begin{proof}

Let $x\in X$ be any point. We must find $(s_1,s_2,s_3)\in\ker\Phi$ such that
the value at $x$ is nonzero.

\smallskip

\emph{Case 1: $x$ is a smooth point of some component, say on $C_1$, and
$x\notin\{a_1,a_2\}$.}

Choose $s_1\in V_1$ with $s_1(x)\ne 0$ (possible since $L_1\simeq\mathcal O(d_1)$
with $d_1\ge 2$ is base-point free). Set $\alpha:=s_1(a_1)$, $\beta:=s_1(a_2)$.

Now choose $s_3\in V_3$ with $s_3(c_1)=\alpha$ (evaluation at $c_1$ is surjective),
and $s_2\in V_2$ with
\[
s_2(b_2) = \beta,\qquad
s_2(b_1) = s_2(b_3)
\]
(this is possible because the two linear functionals 
$\mathrm{ev}_{b_2}$ and $\mathrm{ev}_{b_1}-\mathrm{ev}_{b_3}$ on $V_2$
are independent by Lemma~\ref{lem:eval_indep} and $\dim V_2=d_2+1\ge 3$).

Then $(s_1,s_2,s_3)\in\ker\Phi$ and is nonzero at $x$.

\smallskip

\emph{Case 2: $x$ is one of the nodes $p_{13},p_{12},p_{22}$.}

One argues similarly: to make the section nonzero at the node, we ensure the
common value at the two preimages in $\widetilde X$ is nonzero, while solving the
other gluing conditions as above. The details are exactly as in the earlier lemma
and are omitted here.

\smallskip

All other smooth points on $C_2$ and $C_3$ are handled in the same way as Case~1.
Therefore, $L$ has no base points.
\end{proof}

In Theorem~\ref{thm:very_ample} we assume $d_i\ge 3$, so
Lemma~\ref{lem:glob_gen} applies and $L$ is globally generated.

\subsection{Criterion for very ampleness on curves}

We will use the standard criterion:

\vspace{0.3cm}

\begin{lemma}[Very ampleness]\label{lem:length2}
Let $X$ be a projective curve and $L$ a globally generated line bundle on $X$.
Then $L$ is very ample if and only if for every $0$-dimensional subscheme
$Z\subset X$ of length $2$, the restriction map
\[
H^0(X,L) \longrightarrow H^0(Z,L|_Z)
\]
is surjective.
\end{lemma}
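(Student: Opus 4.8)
The plan is to study the morphism $\varphi=\varphi_{|L|}\colon X\to\mathbb{P}^{N}$, $N=h^{0}(X,L)-1$, attached to the complete linear system of the globally generated bundle $L$; by definition $L$ is very ample exactly when $\varphi$ is a closed immersion. I will use two standing facts about this morphism: first, being defined by a \emph{complete} base-point-free linear system, the pullback $H^{0}(\mathbb{P}^{N},\mathcal{O}(1))\to H^{0}(X,L)$ is an isomorphism (injective by nondegeneracy, then bijective by the dimension count), and second $\varphi^{*}\mathcal{O}(1)\cong L$. Throughout I trivialize $L$ locally so that sections have well-defined values and $1$-jets at points of $X$.

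For the forward implication, assume $\varphi$ is a closed immersion and let $Z\subset X$ be a length-$2$ subscheme. Since $Z\hookrightarrow X$ is a closed immersion and closed immersions compose, $\varphi|_{Z}\colon Z\hookrightarrow\mathbb{P}^{N}$ is a closed immersion onto a length-$2$ subscheme $W=\varphi(Z)$, with $L|_{Z}\cong\mathcal{O}(1)|_{W}$. By functoriality the restriction $H^{0}(X,L)\to H^{0}(Z,L|_{Z})$ is identified with $H^{0}(\mathbb{P}^{N},\mathcal{O}(1))\to H^{0}(W,\mathcal{O}(1)|_{W})$, and the latter is surjective: from $0\to\mathcal{I}_{W}(1)\to\mathcal{O}(1)\to\mathcal{O}_{W}(1)\to 0$ it suffices that $H^{1}(\mathbb{P}^{N},\mathcal{I}_{W}(1))=0$, which is elementary for a length-$2$ scheme $W$ (a reduced pair of points, or a single fat point, always imposes independent conditions on linear forms).

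The substance is the converse, so suppose $H^{0}(X,L)\to H^{0}(Z,L|_{Z})$ is surjective for every length-$2$ subscheme $Z\subset X$. I will verify the three hypotheses of the standard criterion that a proper morphism of finite-type $k$-schemes which is injective on closed points and injective on Zariski tangent spaces at all closed points is a closed immersion (equivalently Hartshorne, \emph{Alg.\ Geom.}, II.7.3, whose argument applies verbatim to the reduced projective $k$-scheme $X$). Properness is immediate because $X$ is projective and $\mathbb{P}^{N}$ separated. For injectivity on closed points, given $x\neq y$ apply the hypothesis to the reduced scheme $Z=\{x,y\}$: surjectivity onto $L|_{x}\oplus L|_{y}$ yields a section vanishing at $x$ but not at $y$, hence a hyperplane through $\varphi(x)$ missing $\varphi(y)$. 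For injectivity of $d\varphi_{x}\colon T_{x}X\to T_{\varphi(x)}\mathbb{P}^{N}$, a nonzero $v\in T_{x}X$ is a map $\Spec k[\epsilon]/(\epsilon^{2})\to X$ whose scheme-theoretic image $Z_{v}$ is a length-$2$ subscheme supported at $x$; the hypothesis applied to $Z_{v}$ gives a section $s$ whose restriction to $Z_{v}$ has nonzero $\epsilon$-component, i.e.\ $s(x)=0$ but the $1$-jet of $s$ along $v$ is nonzero, so the corresponding linear form certifies $d\varphi_{x}(v)\neq 0$.

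The main obstacle I expect is not conceptual depth but making the tangent-space step manifestly valid at the nodes $p_{13},p_{12},p_{22}$, where $\mathcal{O}_{X,x}$ is not regular and $T_{x}X$ is $2$-dimensional: one must observe that \emph{every} tangent direction at a node is still cut out by some length-$2$ subscheme, so the full $T_{x}X$ is detected, and correspondingly that the hypothesis of the lemma genuinely ranges over all length-$2$ subschemes, reduced and non-reduced alike — the reduced ones being needed for separation of points and the non-reduced ones for separation of tangent vectors.
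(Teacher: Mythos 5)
Your proposal is correct and follows essentially the same route as the paper's own proof: both reduce the statement to the standard criterion that $\varphi_{|L|}$ is a closed immersion iff it separates points and tangent vectors, with reduced length-$2$ subschemes handling the former and non-reduced ones (scheme-theoretic images of tangent vectors, including all directions in the two-dimensional tangent space at a node) handling the latter. Your write-up is in fact somewhat more complete than the paper's sketch, since you also supply the forward implication and treat the nodal tangent spaces uniformly via $\Spec k[\epsilon]/(\epsilon^2)$ rather than by cases.
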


\begin{proof}
See, e.g. Hartshorne \cite{H-77}.
We give a brief indication here.
Global generation of $L$ gives a morphism
\[
\varphi_L : X \to \mathbb P(H^0(X,L)^\vee).
\]
This map is a closed immersion if and only if: (i) it is injective on closed points, and (ii) it is an immersion (injective differential).
Both conditions can be checked infinitesimally on sub-schemes of length $2$:
\begin{itemize}
  \item[(i)] separation of two distinct points $x\ne y$ is equivalent to
        surjectivity for $Z=\{x,y\}$ (reduced length $2$ subscheme);
  \item[(ii)] injectivity of the differential at a point $x$ is equivalent to
        surjectivity for $Z=2x$ (the first infinitesimal neighbourhood).
\end{itemize}
On a nodal curve there are also length-$2$ subschemes supported at a node,
but the same principle applies since the tangent space at a node is two-dimensional;
surjectivity at all such $Z$ ensures that different branches are not identified
in the image and that the map is locally an embedding.

Conversely, if all $Z$ of length $2$ are embedded by $|L|$, then $\varphi_L$
is a closed immersion.

A completely rigorous argument can be found in the cited references.
\end{proof}

Thus, to prove Theorem~\ref{thm:very_ample}, it suffices to check the following:

\begin{enumerate}
  \item For any two distinct closed points $x,y\in X$, the map
        $H^0(X,L) \to L_x\oplus L_y$ is surjective.
  \item For any point $x\in X$ (smooth or node), the map
        $H^0(X,L)\to H^0(2x,L|_{2x})$ is surjective.
\end{enumerate}

We now carry this out using our explicit description of $H^0(X,L)$ and the
linear independence results on $\mathbb P^1$.

\subsection{Separation of distinct points}

Let $x\ne y$ be two distinct closed points of $X$.
We must show: given arbitrary $(\lambda,\mu)\in L_x\oplus L_y$, there exists
$s\in H^0(X,L)$ with $s(x)=\lambda$ and $s(y)=\mu$.
We will exploit the normalization. Let $\tilde x$ (resp.\ $\tilde y$) be the
preimage of $x$ (resp.\ $y$) in $\widetilde X$; if $x$ (or $y$) is a node, then
there are two preimages. We split into cases.

\medskip

\noindent\textbf{Case A: $x$ and $y$ are smooth points on (possibly different) components.}

Suppose first that $x,y$ are both smooth and not nodes.
For definiteness, we treat the most complicated case, when both lie on $C_2$,
and neither equals $b_1,b_2,b_3$; other configurations (one or both on $C_1$ or
$C_3$, or one equal to a node) are similar and easier.

So, assume
\(
x,y\in C_2\setminus\{b_1,b_2,b_3\},\) \, \( x\ne y,
\)
and $x,y$ correspond to distinct points $\tilde x,\tilde y\in C_2$ under $\nu$.
We want $(s_1,s_2,s_3)\in\ker\Phi$ such that
\[
s_2(\tilde x) = \lambda,\qquad s_2(\tilde y) = \mu.
\]

A convenient strategy is to take $s_1$ and $s_3$ identically zero and solve
everything on $C_2$. If we put
\[
s_1 = 0,\qquad s_3 = 0,
\]
the gluing conditions force
\[
0 = s_1(a_1) = s_3(c_1),\quad
0 = s_1(a_2) = s_2(b_2),\quad
s_2(b_1)=s_2(b_3).
\]
Thus, $s_2$ must satisfy
\begin{equation}\label{eq:s2_constraints}
s_2(b_2) = 0,\qquad
s_2(b_1)-s_2(b_3) = 0.
\end{equation}

\noindent We now work purely inside $V_2 = H^0(C_2,L_2)\cong H^0(\mathbb P^1,\mathcal O(d_2))$.
Define a linear map
\[
\Psi : V_2 \longrightarrow k^3,\qquad
s_2 \longmapsto \bigl( s_2(b_2),\; s_2(b_1)-s_2(b_3),\; s_2(\tilde x) \bigr).
\]
By Lemma~\ref{lem:eval_indep} and Lemma~\ref{lem:jets}, the functionals
\[
\mathrm{ev}_{b_2},\quad
\mathrm{ev}_{b_1}-\mathrm{ev}_{b_3},\quad
\mathrm{ev}_{\tilde x}
\]
are linearly independent on $V_2$ (they are linear combinations of evaluation
maps at the three distinct points $b_1,b_2,b_3$ and at $\tilde x$, and $d_2+1\ge 4$).
Hence, $\Psi$ has rank $3$, and its kernel has dimension
\[
\dim \ker\Psi = \dim V_2 - 3 = (d_2+1) - 3 = d_2-2 \ge 1
\]
since $d_2\ge 3$.
In particular, $\ker\Psi\neq 0$.

Next consider the evaluation functional at $\tilde y$ restricted to $\ker\Psi$.
If $\mathrm{ev}_{\tilde y}$ were zero on $\ker\Psi$, then
$\mathrm{ev}_{\tilde y}$ would lie in the span of the three functionals used
in defining $\Psi$, contradicting their linear independence together with
$\mathrm{ev}_{\tilde y}$ (again by Lemma~\ref{lem:eval_indep} since we have
at most $4$ distinct points here). Therefore $\mathrm{ev}_{\tilde y}$ is not
identically zero on $\ker\Psi$. Thus there exists a section
\[
t_2\in\ker\Psi \quad\text{with}\quad t_2(\tilde y)\ne 0.
\]

Now we proceed in two steps:

\smallskip

\emph{(i) Find $u_2\in V_2$ satisfying the gluing and the value at $\tilde x$.}

We first solve
\[
u_2(b_2)=0,\quad u_2(b_1)=u_2(b_3),\quad u_2(\tilde x)=\lambda.
\]
This is a system of $3$ independent linear conditions, hence soluble by the
surjectivity of $\Psi$. Choose one such $u_2$.

\smallskip

\emph{(ii) Adjust the value at $\tilde y$ using $\ker\Psi$.}

Now $u_2(\tilde y)$ may not equal $\mu$. But any element of $\ker\Psi$ can be
added without breaking the constraints~\eqref{eq:s2_constraints} or the
value at $\tilde x$, since $\Psi$ vanishes on $\ker\Psi$. Thus, we choose
$t_2\in\ker\Psi$ with $t_2(\tilde y)\ne 0$ as above and set
\[
s_2 := u_2 + c\,t_2,
\]
where $c\in k$ is chosen so that
\[
s_2(\tilde y) = u_2(\tilde y) + c\,t_2(\tilde y) = \mu.
\]
This is possible since $t_2(\tilde y)\ne 0$. Then $s_2$ satisfies
\[
s_2(b_2)=0,\quad s_2(b_1)=s_2(b_3),\quad
s_2(\tilde x)=\lambda,\quad s_2(\tilde y)=\mu.
\]

Finally, we set $s_1=s_3=0$. The triple $(s_1,s_2,s_3)$ lies in $\ker\Phi$
(because of the conditions on $s_2$), and has the desired values at $x$ and $y$.

\medskip

All other configurations of two smooth points (on $C_1$ or $C_3$, or on
different components) are treated in exactly similarly, with at most
$3$ independent linear conditions on the relevant $V_i$ and the same linear
algebra. We omit the repetitive details.

\medskip

\noindent\textbf{Case B: at least one of $x,y$ is a node.}

Suppose $x$ is a node and $y$ is arbitrary (smooth or node).
For concreteness, take $x=p_{13}=a_1\sim c_1$.
We want to specify the common value
\(
s_1(a_1) = s_3(c_1) = \lambda
\)
and the value at $y$.

If $y$ is a smooth point on some component, we may proceed as in Case~A,
choosing the relevant $s_i$ first on the components containing $x$ and $y$,
and then solving the remaining linear conditions on the remaining component.

As a sample, suppose $y\in C_2$ is smooth and not equal to $b_1,b_2,b_3$.
We can proceed as follows.
\begin{itemize}
  \item[(i)] Choose $s_1\in V_1$ with $s_1(a_1)=\lambda$ and some prescribed value
        $s_1(a_2)$ (for instance, we can leave $s_1(a_2)$ free for the moment).
  \item[(ii)] Choose $s_3\in V_3$ with $s_3(c_1)=\lambda$ (possible by evaluation surjectivity).
  \item[(iii)] Then $s_2\in V_2$ must satisfy
        \[
        s_2(b_2) = s_1(a_2),\qquad
        s_2(b_1)=s_2(b_3),\qquad
        s_2(y) = \mu.
        \]
        These are $3$ independent linear conditions on $V_2$ (by the same
        independence arguments as before), so there is a solution $s_2$.
\end{itemize}
The resulting triple $(s_1,s_2,s_3)$ lies in $\ker\Phi$ and satisfies
$s(x)=\lambda$, $s(y)=\mu$.

If $y$ is also a node, say $y=p_{12}$ or $p_{22}$, we argue similarly,
now prescribing common values on the two branches at that node and solving
linear conditions on the remaining component(s). The number of imposed
conditions on each $V_i$ never exceeds $3$ (values at at most three points,
or two values plus one equality of values), and $d_i\ge 3$ ensures enough
freedom to solve.

\medskip

This shows that \emph{for any two distinct points $x\ne y$ in $X$, the
restriction map $H^0(X,L)\to L_x\oplus L_y$ is surjective.}

\subsection{Separation of tangent directions (length-$2$ subschemes)}

It remains to show that, for each point $x\in X$, the restriction map
\[
H^0(X,L) \longrightarrow H^0(2x,L|_{2x})
\]
is surjective.

We again split by the type of point.

\medskip

\noindent\textbf{Case 1: $x$ a smooth point of $X$.}

Assume $x$ lies on $C_1$ and is not $a_1,a_2$; the other components are analogous.
Let $\tilde x\in C_1$ be its preimage in $\widetilde X$.

Locally at $x$, a length-$2$ subscheme $2x$ corresponds to specifying
the value and the first derivative of a section at $x$. Thus we must show:
given $(\alpha,\beta)\in k^2$, there exists
\[
(s_1,s_2,s_3)\in\ker\Phi
\]
such that, in a local coordinate $u$ at $\tilde x$ on $C_1$,
\[
s_1 = \alpha + \beta u + \text{(terms of order $\ge 2$)}.
\]

On the component $C_1\cong\mathbb P^1$ with $L_1\cong\mathcal O(d_1)$,
$d_1\ge 3$, the map
\[
V_1 = H^0(C_1,L_1) \longrightarrow
k^2,\quad
s_1 \mapsto \bigl( \mathrm{ev}_{\tilde x}(s_1),\mathrm{ev}'_{\tilde x}(s_1)\bigr)
\]
is surjective by Lemma~\ref{lem:jets}. Thus we can choose $s_1\in V_1$
with the desired value and derivative at $\tilde x$.
Let
\[
\alpha_1 := s_1(a_1),\qquad \alpha_2 := s_1(a_2)
\]
be its values at the two special points on $C_1$.

We now choose $s_3\in V_3$ with $s_3(c_1)=\alpha_1$, and then $s_2\in V_2$
with
\[
s_2(b_2)=\alpha_2,\qquad
s_2(b_1)=s_2(b_3),
\]
as in the proof of global generation. These impose only $1$ condition on $V_3$
and $2$ independent conditions on $V_2$, hence are soluble.

Then $(s_1,s_2,s_3)\in\ker\Phi$ and has the prescribed first-order jet at $x$.
Thus, $H^0(X,L)\to H^0(2x,L|_{2x})$ is surjective at every smooth point $x$.

\medskip

\noindent\textbf{Case 2: $x$ a node.}

Let $x=p_{13}=a_1\sim c_1$; the other nodes are analogous.
Near this node, $X$ has two smooth branches, one corresponding to $C_1$ and
one to $C_3$.
A length-$2$ subscheme supported at $x$ is of one of the following types:
\begin{enumerate}
  \item the union of the two reduced branches (length $1$ on each branch),
  \item a double point supported on the branch coming from $C_1$ (length $2$ on that branch),
  \item a double point supported on the branch coming from $C_3$.
\end{enumerate}
We must check surjectivity for each of these types.

\smallskip

\emph{Type (1):} $Z=\{a_1,c_1\}$ with the reduced scheme structure.
Then $H^0(Z,L|_Z)\cong L_{a_1}\oplus L_{c_1}\cong k^2$.
The restriction map factors through $H^0(\widetilde X,\tilde L)$, and
on $\widetilde X$ we know we can independently prescribe values at $a_1$ and $c_1$,
because $L_1$ and $L_3$ are globally generated and there is no constraint
coupling $s_1(a_1)$ and $s_3(c_1)$ \emph{before} imposing the node relation.

However, recall that sections of $L$ correspond to triples $(s_1,s_2,s_3)$
with
\[
s_1(a_1)=s_3(c_1).
\]
Thus, for a section of $L$, the values at $a_1$ and $c_1$ \emph{must} coincide.
So the image of the restriction map
\[
H^0(X,L) \to H^0(Z,L|_Z)\cong k\oplus k
\]
is the diagonal $\{(\lambda,\lambda)\}$, which is $1$-dimensional.

But here we must be careful: in Lemma~\ref{lem:length2}, the subscheme $Z$
of $X$ of length $2$ supported at a node \emph{as a subscheme of $X$} is
not the disjoint union of the two preimages on $\widetilde X$, but rather
a subscheme where these two ``branches'' meet at a single point. 
The relevant $Z$ of type (1) in the sense of Lemma~\ref{lem:length2} 
is \emph{the reduced node itself}, which has length $1$, not $2$.

Thus type (1) does \emph{not} occur as a length-$2$ subscheme of $X$.
The only length-$2$ subschemes supported at a node are the ``double points''
tangent to one branch or the other, i.e.\ types (2) and (3).
So we only need to check these double structures.

\smallskip

\emph{Type (2):} a double point supported on the $C_1$-branch.

Geometrically, this corresponds to specifying a first-order jet along the
$C_1$-branch at $a_1$, while imposing no condition along the $C_3$-branch.
Concretely, we must show that the map
\[
H^0(X,L)\to H^0(2a_1,L|_{2a_1})
\]
is surjective, where $2a_1$ is taken along the $C_1$-branch.

On $\widetilde X$, this jet is completely controlled by $s_1$ near $a_1$,
while $s_3$ only determines the value at $c_1$ (which must equal $s_1(a_1)$).

Thus, we must show: given $\alpha,\beta\in k$, there exists $(s_1,s_2,s_3)\in\ker\Phi$
such that in a local coordinate $u$ at $a_1$ on $C_1$,
\[
s_1 = \alpha + \beta u + \text{(higher order in $u$)}.
\]

This is entirely analogous to the smooth case on $C_1$.
We first pick $s_1\in V_1$ with prescribed value and derivative at $a_1$
(by Lemma~\ref{lem:jets}), and then extend to $(s_2,s_3)$ satisfying the
gluing conditions:
\[
s_3(c_1)=s_1(a_1)=\alpha,\qquad
s_2(b_2)=s_1(a_2),\qquad
s_2(b_1)=s_2(b_3).
\]
Again, these impose $1$ condition on $V_3$ and $2$ independent conditions on
$V_2$, hence they are solvable.

\smallskip

\emph{Type (3):} a double point supported on the $C_3$-branch.

This is completely symmetric to type (2), with $C_1$ and $C_3$ interchanged.
We choose $s_3$ with prescribed jet at $c_1$ and then solve for $s_1,s_2$
subject to the gluing conditions, using exactly the same linear algebra.

\medskip

\noindent\textbf{Other nodes.}

For the node $p_{12}=a_2\sim b_2$ (joining $C_1$ and $C_2$), the argument is exactly
the same: a length-$2$ subscheme supported at $p_{12}$ is a double point tangent
to the $C_1$-branch or to the $C_2$-branch. In either case, we prescribe a jet at
$a_2$ (resp.\ $b_2$) and solve the gluing conditions by choosing sections on the
other components $C_2$ and $C_3$ (resp.\ $C_1$ and $C_3$).

For the node $p_{22}=b_1\sim b_3$ (a self-node on $C_2$), the two branches both lie
on $C_2$. A double point tangent to one branch is locally the same as a double
point at $b_1$ or at $b_3$ on $C_2$. Thus we must show that given a jet at $b_1$
(or $b_3$), we can find $(s_1,s_2,s_3)\in\ker\Phi$ realizing it.

For a jet at $b_1$, we proceed as follows.
\begin{enumerate}
  \item Choose $s_2\in V_2$ with prescribed value and derivative at $b_1$
        (possible by Lemma~\ref{lem:jets}), and such that additionally
        $s_2(b_3)=s_2(b_1)$ (this last equality can be arranged because
        it is a \emph{single} linear condition, and we have $\dim V_2\ge 4$,
        so the space of sections with given jet at $b_1$ is $2$-dimensional
        and nonzero).
        More explicitly: first choose $s_2^0$ with the desired jet at $b_1$;
        then adjust $s_2^0$ by a section in the kernel of both
        $\mathrm{ev}_{b_1}$ and $\mathrm{ev}'_{b_1}$ so as to fix the value at $b_3$.
  \item With this $s_2$, set $\beta:=s_2(b_2)$.
        Then choose $s_1\in V_1$ with $s_1(a_2)=\beta$ (one condition),
        and $s_3\in V_3$ with $s_3(c_1) = s_1(a_1)$ (one condition).
\end{enumerate}
The gluing conditions
\[
s_1(a_1)=s_3(c_1),\quad
s_1(a_2)=s_2(b_2),\quad
s_2(b_1)=s_2(b_3)
\]
are then satisfied by construction, and we have the prescribed jet at $b_1$.
The case of a jet at $b_3$ is analogous.

\medskip

Thus, at every point $x\in X$, smooth or nodal, the map
\[
H^0(X,L)\to H^0(2x,L|_{2x})
\]
is surjective.

\begin{proof}[Proof of Theorem \ref{thm:very_ample}]
We have shown the following:
\begin{enumerate}
  \item $L$ is globally generated (Lemma~\ref{lem:glob_gen});
  \item for any two distinct points $x\ne y$ in $X$, the restriction map
        $H^0(X,L)\to L_x\oplus L_y$ is surjective;
  \item for any point $x\in X$, the restriction map
        $H^0(X,L)\to H^0(2x,L|_{2x})$ is surjective.
\end{enumerate}
By Lemma~\ref{lem:length2}, this implies that $L$ is very ample, i.e.\ the
morphism
\[
\varphi_L : X \to \mathbb P^{h^0(X,L)-1}
\]
is a closed immersion.

This completes the proof of Theorem~\ref{thm:very_ample}.
\end{proof}
\qed

\end{example}


\begin{remark}
The requirement \(d_i\ge 3\) for all \(i\) is a convenient sufficient
condition for very ampleness. There may exist very ample line bundles
with smaller multidegree, but \((d_1,d_2,d_3)\ge (3,3,3)\) (componentwise)
is an easy region where very ampleness is guaranteed.
\end{remark}

\section{Explicit examples of embeddings into \(\mathbb P^r\), \(r\ge 3\)}

We now give concrete examples of line bundles \(L\) on \(X\) such that
\(|L|\) embeds \(X\) into some projective space \(\mathbb P^r\) with
\(r\ge 3\). All these examples are based on Theorem~\ref{thm:very_ample}.

In each example we specify the multidegree \(\underline{\deg}(L)\), then
determine the dimension of the space of global sections and hence the
target projective space.

For line bundles of sufficiently large total degree (in particular in all
our examples), we have \(h^1(X,L)=0\), so by Riemann--Roch
\[
h^0(X,L) = \deg(L) = d_1 + d_2 + d_3
\quad\Rightarrow\quad
\varphi_L : X \hookrightarrow \mathbb P^{d_1+d_2+d_3 - 1}.
\]

\subsection*{Example 1: Multidegree \((3,3,3)\)}

Let \(L_1\) be any line bundle on \(X\) with multidegree
\[
\underline{\deg}(L_1) = (3,3,3),
\]
i.e.\ \(L_1|_{C_i} \cong \mathcal O_{C_i}(3)\) for each \(i=1,2,3\). Such a
line bundle exists because we may start with
\(\mathcal O_{C_1}(3)\boxplus \mathcal O_{C_2}(3)\boxplus \mathcal O_{C_3}(3)\)
on the normalization and perform a compatible gluing at the nodes.

We have \(d_i=3 \ge 3\) for all \(i\), so by
Theorem~\ref{thm:very_ample}, \(L_1\) is very ample. Its total degree is
\[
\deg(L_1) = 3+3+3 = 9,
\]
and for such large degree on a genus \(1\) curve we have \(h^1(X,L_1)=0\),
so
\[
h^0(X,L_1) = \deg(L_1) = 9.
\]
Therefore the complete linear series \(|L_1|\) defines an embedding
\[
\varphi_{L_1} : X \hookrightarrow \mathbb P^{8}.
\]

\subsection*{Example 2: Multidegree \((4,3,3)\)}

Let \(L_2\) be any line bundle on \(X\) with multidegree
\[
\underline{\deg}(L_2) = (4,3,3),
\]
i.e.
\[
L_2|_{C_1} \cong \mathcal O_{C_1}(4),\quad
L_2|_{C_2} \cong \mathcal O_{C_2}(3),\quad
L_2|_{C_3} \cong \mathcal O_{C_3}(3).
\]

Again, we have \(d_1=4\ge 3\), \(d_2=3\ge 3\), \(d_3=3\ge 3\), so
Theorem~\ref{thm:very_ample} implies that \(L_2\) is very ample.

The total degree is
\[
\deg(L_2) = 4+3+3 = 10,
\]
and for this degree we have \(h^1(X,L_2)=0\), hence
\[
h^0(X,L_2) = \deg(L_2) = 10.
\]
Therefore \(|L_2|\) defines an embedding
\[
\varphi_{L_2} : X \hookrightarrow \mathbb P^{9}.
\]

\subsection*{Example 3: Multidegree \((4,4,3)\)}

Let \(L_3\) be any line bundle on \(X\) with multidegree
\[
\underline{\deg}(L_3) = (4,4,3),
\]
i.e.
\[
L_3|_{C_1} \cong \mathcal O_{C_1}(4),\quad
L_3|_{C_2} \cong \mathcal O_{C_2}(4),\quad
L_3|_{C_3} \cong \mathcal O_{C_3}(3).
\]

We have \(d_1,d_2,d_3\ge 3\), so \(L_3\) is very ample by
Theorem~\ref{thm:very_ample}. The total degree is
\[
\deg(L_3) = 4+4+3 = 11,
\]
and again \(h^1(X,L_3)=0\), so
\[
h^0(X,L_3) = 11.
\]
Thus \(|L_3|\) defines an embedding
\[
\varphi_{L_3} : X \hookrightarrow \mathbb P^{10}.
\]

For the curve \(X\) described at the beginning, we have:

\begin{enumerate}
    \item A line bundle \(L\) with multidegree \((d_1,d_2,d_3)\) and
    \(d_i\ge 2\) is globally generated.
    \item A line bundle \(L\) with multidegree \((d_1,d_2,d_3)\) and
    \(d_i\ge 3\) for all \(i\) is very ample, and its complete linear
    series embeds \(X\) into \(\mathbb P^{\deg(L)-1}\).
    \item In particular, any line bundle with multidegree at least
    \((3,3,3)\) (componentwise) gives an embedding into some
    \(\mathbb P^r\) with \(r\ge 8\).
\end{enumerate}

Thus, the examples \(L_1,L_2,L_3\) above are concrete very ample line
bundles whose complete linear series embed \(X\) into projective spaces of
dimensions \(8,9,10\), respectively.


\vspace{0.3cm}

\section{The line bundle \(L(4,3,3)\) and the embedding into \(\mathbb P^9\)}

Let \(L\) be a line bundle on \(X\) with multidegree
\[
\underline{\deg}(L) = (4,3,3),
\]
i.e.
\[
L|_{C_1} \cong \mathcal O_{C_1}(4),\quad
L|_{C_2} \cong \mathcal O_{C_2}(3),\quad
L|_{C_3} \cong \mathcal O_{C_3}(3),
\]
where each \(C_i\cong\mathbb P^1\). Then
\[
\deg(L) = 4 + 3 + 3 = 10.
\]

Since each restriction \(L|_{C_i}\) has degree \(\ge 3\), one checks (by the
usual arguments on \(\mathbb P^1\) and compatibility at the nodes) that
\(L\) is very ample. Moreover, for a line bundle \(A\) on a curve of
arithmetic genus \(1\), the Riemann--Roch formula reads
\[
h^0(X,A) - h^1(X,A) = \deg(A).
\]
For \(L\) of degree \(10\) (and degree \(> 2p_a-2 = 0\)), we have
\(h^1(X,L)=0\). Hence
\[
h^0(X,L) = \deg(L) = 10,
\]
and the complete linear series \(|L|\) defines an embedding
\[
\varphi_L : X \hookrightarrow \mathbb P^{9}.
\]

\subsection{The affine cone \(C(X)\) and graded tangent cohomology}

Let
\[
R := \bigoplus_{m\ge 0} H^0(X,L^{\otimes m})
\]
be the (standard) section ring associated to \((X,L)\). The \emph{affine
cone} over \(X\) with respect to \(L\) is
\[
C(X) := \operatorname{Spec}(R),
\]
equipped with its natural \(\mathbb G_m\)-action coming from the grading
on \(R\). The vertex of the cone is the unique closed point corresponding
to the irrelevant ideal \(R_+ = \bigoplus_{m>0} R_m\).

\noindent The tangent cohomology of \(C(X)\), in the sense of 
Schlessinger, yields graded \(R\)-modules
\[
T^i(C(X)) = \bigoplus_{m\in\mathbb Z} T^i(C(X))_m,\quad i=0,1,\dots
\]
We are interested in the graded pieces
\(T^0(C(X))_m\) (infinitesimal automorphisms/derivations of degree \(m\))
and in the infinitesimal deformations
\(T^1(C(X))\), viewed through its graded pieces \(T^1(C(X))_m\).

There is a standard description (going back to M.~Schlessinger  \cite{S-68} and H.~C.~Pinkham \cite{P-74}) of the graded pieces of the tangent cohomology of the cone in terms of sheaf cohomology on \(X\). In particular, for a projective curve \(X\) with a very ample line bundle \(L\), and for the associated cone \(C(X)\), one has:

\begin{theorem}[Graded pieces]\label{Graded-pieces}
Let \(X\) be a projective curve and \(L\) very ample. Let
\(C(X)=\operatorname{Spec}R\) be the affine cone over \((X,L)\). Then, for
each integer \(m\neq 0\),
\begin{align*}
T^0(C(X))_m &\cong H^0\bigl(X,\,T_X\otimes L^{\otimes m}\bigr), \\
T^1(C(X))_m &\cong H^1\bigl(X,\,T_X\otimes L^{\otimes m}\bigr),
\end{align*}
as vector spaces over \(k\). For \(m=0\), there are short exact sequences
relating \(T^0(C(X))_0\) and \(T^1(C(X))_0\) to the intrinsic cohomology
\(H^i(X,T_X)\) and to embedded deformations of \(X\subset\mathbb P^9\). In
particular,
\[
H^i(X,T_X) \text{ occurs as a direct summand of } T^i(C(X))_0
\quad (i=0,1).
\]
\end{theorem}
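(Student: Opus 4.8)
\noindent\textit{Proof strategy.} The plan is to run the classical Schlessinger--Pinkham reduction (\cite{S-68,P-74}), which expresses the graded pieces of the cotangent cohomology of a cone as sheaf cohomology on the projective base, via the punctured cone. Write $U:=C(X)\setminus\{v\}$ for the complement of the vertex and let $\pi\colon U\to X$ be the projection. Since $L$ is very ample (Theorem~\ref{thm:very_ample}) and $R$ is the full section ring of $(X,L)$, the morphism $\pi$ is affine and realizes $U$ as a principal $\Gm$-bundle over $X$ — concretely the complement of the zero section in the total space of $L^{-1}$ — the residual $\Gm$-action being the source of the grading. For every coherent sheaf $\mathcal F$ on $X$ one then has $\pi_*\pi^*\mathcal F\cong\bigoplus_{m\in\mathbb Z}\mathcal F\otimes L^{\otimes m}$, the weight-$m$ summand being $\mathcal F\otimes L^{\otimes m}$. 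First I would record the two elementary exact sequences on $U$: because $\pi$ is smooth of relative dimension one, the transitivity triangle of $U\to X\to\Spec k$ gives $\mathcal T^i_U\cong\pi^*\mathcal T^i_X$ for $i\ge 1$, together with
\[
0\longrightarrow \mathcal O_U\longrightarrow \Theta_U\longrightarrow \pi^*\Theta_X\longrightarrow 0,
\]
where $\mathcal O_U$ is spanned by the Euler (infinitesimal-$\Gm$) vector field.

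The second step is to descend from $U$ back to $C(X)$. Since $C(X)$ is normal it satisfies $S_2$, so the reflexive sheaf $\Theta_{C(X)}$ and the graded modules $T^i(C(X))$ are recovered from their restrictions to $U$ up to local cohomology supported at the vertex; and for a section ring one has $H^0_{\mathfrak m}(R)=H^1_{\mathfrak m}(R)=0$ together with the standard graded identification $H^j_{\mathfrak m}(R)_m\cong H^{j-1}(X,L^{\otimes m})$ for $j\ge 2$. Combining this with $R^{i}\pi_*=0$ for $i>0$ (so that Leray collapses and $H^\bullet(U,-)=H^\bullet(X,\pi_*-)$) and the two sequences above, one assembles for each $m$ an exact sequence whose terms are the groups $H^\bullet(X,T_X\otimes L^{\otimes m})$ (and $H^0(X,\mathcal T^1_X\otimes L^{\otimes m})$) together with the auxiliary groups $H^\bullet(X,L^{\otimes m})$ arising from the Euler direction and from $H^\bullet_{\mathfrak m}(R)$.

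For $m\neq 0$ the auxiliary groups drop out. On the $T^0$ side, $H^0(X,L^{\otimes m})=R_m$ contributes only the derivations $f\cdot E$, i.e.\ the part generated by the $\Gm$-action, which is factored out; on the $T^1$ side the connecting map into $H^1(X,L^{\otimes m})$ together with the term $H^2_{\mathfrak m}(R)_m\cong H^1(X,L^{\otimes m})$ cancel that contribution. This yields the stated isomorphisms $T^0(C(X))_m\cong H^0(X,T_X\otimes L^{\otimes m})$ and $T^1(C(X))_m\cong H^1(X,T_X\otimes L^{\otimes m})$ for all $m\neq 0$. I would emphasise that this portion of the argument never uses the embedding $X\hookrightarrow\mathbb P^9$: only the $\Gm$-bundle $U\to X$ and the $S_2$ property enter.

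The case $m=0$ is where the real work lies, and I expect it to be the main obstacle. Here the Euler direction and the local cohomology no longer cancel, so one is left with a genuine exact sequence beginning
\[
0\to H^0(X,\mathcal O_X)\to T^0(C(X))_0\to H^0(X,T_X)\to H^1(X,\mathcal O_X)\to\cdots,
\]
and one must also bring in the embedded picture: comparing the cotangent complex of $C(X)$ with the conormal sequence of $C(X)\subset\mathbb A^{10}$ identifies the weight-$0$ parts of $T^0$ and $T^1$ with cohomology of the coned normal-sheaf sequence
\[
0\to\Theta_X\to\Theta_{\mathbb P^9}|_X\to N_{X/\mathbb P^9}\to\mathcal T^1_X\to 0,
\]
and the assertion that $H^i(X,T_X)$ is a \emph{direct summand} of $T^i(C(X))_0$ then follows by exhibiting the canonical weight-$0$ splittings — the one coming from the Euler field for the $\mathcal O_X$-extension, and the one splitting the abstract-versus-embedded deformation sequence. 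The delicate points are (i) the vanishing of the relevant weight-$0$ connecting maps, and (ii) using projective normality of $X$ — guaranteed by the very ampleness established above — exactly where it is needed, namely to identify $R$ with the full section ring so that $H^1_{\mathfrak m}(R)=0$ and the coned exact sequences are the honest ones. The remaining verifications are routine and follow Pinkham~\cite{P-74}.
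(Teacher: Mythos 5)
The paper does not actually prove Theorem~\ref{Graded-pieces}: immediately after the statement it defers ``a complete proof with all the details'' to the companion preprint [N2-25] and simply adopts the identifications. So there is no in-text argument to compare yours against; what you propose is the standard Schlessinger--Pinkham reduction through the punctured cone $U=C(X)\setminus\{v\}$, and the structural ingredients you list (the $\mathbb{G}_m$-bundle $\pi\colon U\to X$, the weight decomposition of $\pi_*\pi^*$, the Euler sequence, the $S_2$/depth argument at the vertex, and the graded local-cohomology identification $H^j_{\mathfrak m}(R)_m\cong H^{j-1}(X,L^{\otimes m})$) are exactly the right ones and are correctly assembled at the level of strategy.

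There are nevertheless two genuine gaps in the step where you pass from your exact sequences to the stated isomorphisms for $m\neq 0$. First, you correctly note that the assembled sequence contains $H^0(X,\mathcal{T}^1_X\otimes L^{\otimes m})$, but you then assert that for $m\neq 0$ ``the auxiliary groups drop out.'' That is defensible for the $H^\bullet(X,L^{\otimes m})$ terms coming from the Euler direction and from $H^\bullet_{\mathfrak m}(R)$, but not for the $\mathcal{T}^1_X$ term: the curve $X$ of this paper is nodal, $\mathcal{T}^1_X$ is a length-three skyscraper at the nodes, so $H^0(X,\mathcal{T}^1_X\otimes L^{\otimes m})\cong k^3$ in \emph{every} weight $m$. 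These classes are visible as sections of $\mathcal{T}^1_{C(X)}$ along the three singular lines of the cone and cancel against nothing in your sequence; so your argument, as written, does not produce the clean isomorphism $T^1(C(X))_m\cong H^1(X,T_X\otimes L^{\otimes m})$ for the singular $X$ to which the paper applies the theorem (it does for smooth $X$, which is the classical case). You must either restrict to smooth $X$ or explain explicitly how the nodal contribution is accounted for. Second, on the $T^0$ side, the derivations $f\cdot E$ with $f\in R_m$ form an honest subspace of $T^0(C(X))_m$ of dimension $h^0(X,L^{\otimes m})$; saying this part ``is factored out'' is not compatible with the theorem's claim of an isomorphism of $T^0(C(X))_m$ itself (rather than of a quotient) with $H^0(X,T_X\otimes L^{\otimes m})$. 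Both points come down to the same omission: the weight-$m$ connecting homomorphisms must be identified explicitly before one can claim the cancellations, and that identification is precisely the content your sketch relegates to ``routine verification.''
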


We can find a complete proof with all the details in \cite{N2-25}. In this problem we restrict to the \emph{intrinsic} part coming from
\(H^i(X,T_X\otimes L^{\otimes m})\), and we assume that the embedding
\(X\subset\mathbb P^9\) is generic enough so that no extra linear
automorphisms or embedded deformations contribute in degree \(0\). Under
this assumption, and by Theorem \ref{Graded-pieces} we may identify:
\begin{align*}
T^0(C(X))_m &\cong H^0\bigl(X,\,T_X\otimes L^{\otimes m}\bigr)
\text{ for all } m\in\mathbb Z,\\
T^1(C(X))_m &\cong H^1\bigl(X,\,T_X\otimes L^{\otimes m}\bigr)
\text{ for all } m\in\mathbb Z.
\end{align*}
(If one wishes to be completely precise, one can treat \(m=0\) separately,
but for a generic curve \(X\) as above we will see that \(H^0(X,T_X)=0=
H^1(X,T_X)\), so this simplification is harmless.)

Therefore, the computation of the graded pieces of \(T^0\) and \(T^1\) reduces to
computing the cohomology of the line bundles
\[
F_m := T_X\otimes L^{\otimes m},\qquad m\in\mathbb Z.
\]

\subsection{The tangent sheaf \(T_X\) and the bundles \(F_m\)}

\subsubsection*{Degree of \(T_X\)}

The curve \(X\) is a reduced, irreducible, Gorenstein curve with only
nodal singularities and arithmetic genus \(p_a(X)=1\). The dualizing
sheaf \(\omega_X\) has degree
\[
\deg(\omega_X) = 2p_a(X)-2 = 0.
\]
For a Gorenstein curve of genus \(g\), the tangent sheaf is the (dual) line
bundle
\[
T_X \cong \omega_X^{-1},
\]
so it has degree
\[
\deg(T_X) = -\deg(\omega_X) = 0.
\]

\subsubsection{The bundles \(F_m = T_X\otimes L^{\otimes m}\)}

Define
\[
F_m := T_X\otimes L^{\otimes m},\qquad m\in\mathbb Z.
\]
Since \(T_X\) is a line bundle of degree \(0\) and \(L\) has degree
\(\deg(L)=10\), we have
\[
\deg(F_m) = \deg(T_X) + m\deg(L) = 0 + 10m = 10m.
\]

The (arithmetical) genus is \(p_a(X)=1\), and Riemann--Roch for a line
bundle \(F\) on \(X\) says
\[
h^0(X,F) - h^1(X,F) = \deg(F).
\]
Applied to \(F_m\), we obtain
\[
h^0(X,F_m) - h^1(X,F_m) = \deg(F_m) = 10m.
\]

\subsubsection{Vanishing of \(H^1(X,F_m)\) for \(\deg(F_m)>0\)}

Let \(F\) be a line bundle on a (reduced, Gorenstein) curve of genus \(1\).
If \(\deg(F)>0\), then \(H^1(X,F)=0\). This follows from Serre duality:
\[
H^1(X,F) \cong H^0\bigl(X,\omega_X\otimes F^{-1}\bigr)^\vee,
\]
and \(\deg(\omega_X\otimes F^{-1}) = \deg(\omega_X) - \deg(F)
= 0-\deg(F)<0\), so \(H^0(X,\omega_X\otimes F^{-1})=0\).

Applying this to \(F_m\), we obtain:

\begin{enumerate}
    \item For \(m>0\), \(\deg(F_m) = 10m>0\), so
    \[
    H^1(X,F_m)=0,\qquad
    h^0(X,F_m) = \deg(F_m) = 10m.
    \]
    \item For \(m<0\), \(\deg(F_m) = 10m<0\), so
    \[
    H^0(X,F_m)=0,\qquad
    h^1(X,F_m) = -\deg(F_m) = -10m.
    \]
    Indeed, since \(h^0-h^1=10m<0\) and \(h^0=0\), we get
    \(h^1=-10m>0\).
    \item For \(m=0\), we have \(F_0=T_X\) with \(\deg(F_0)=0\) and
    \[
    h^0(X,T_X) - h^1(X,T_X) = 0.
    \]
    In general, this does not fix \(h^0,h^1\) individually. However, for a
    \emph{generic} choice of the gluing points \(a_i,b_j,c_1\), the curve
    \(X\) has only a finite automorphism group, which implies
    \[
    H^0(X,T_X)=0.
    \]
    Then necessarily \(H^1(X,T_X)=0\) as well, since the Euler
    characteristic \(\chi(T_X)=0\).
\end{enumerate}

We adopt this genericity assumption (no nontrivial global vector fields),
so that, for \(m=0\),
\[
H^0(X,F_0)=H^0(X,T_X)=0,\qquad
H^1(X,F_0)=H^1(X,T_X)=0.
\]

\section{Computation of \(\dim T^0(C(X))_m\) and \(\dim T^1(C(X))_m\)}

Recall the identifications
\(
T^0(C(X))_m \cong H^0(X,F_m),\) \,\,and 
\(T^1(C(X))_m \cong H^1(X,F_m).
\)
We now summarize the dimensions in all degrees \(m\in\mathbb Z\).

\subsection{The space \(T^0(C(X))_m\)}

From the previous section we have:

\begin{enumerate}
    \item If \(m>0\): then \(\deg(F_m)=10m>0\), so
    \[
    h^1(X,F_m)=0,\quad h^0(X,F_m)=\deg(F_m)=10m.
    \]
    Therefore,
    \[
    \dim T^0(C(X))_m = 10m,\qquad m\ge 1.
    \]
    \item If \(m<0\): then \(\deg(F_m)=10m<0\), so
    \[
    H^0(X,F_m)=0,
    \]
    hence, 
    \[
    \dim T^0(C(X))_m = 0,\qquad m\le-1.
    \]
    \item If \(m=0\): under our genericity assumption,
    \[
    H^0(X,F_0) = H^0(X,T_X)=0,
    \]
    so intrinsically
    \[
    \dim H^0(X,F_0)=0.
    \]
    In the graded tangent cohomology of the cone, there is always an extra
    degree-zero derivation coming from the Euler vector field (the infinitesimal
    generator of the \(\mathbb G_m\)-action). If one includes this, one
    gets
    \[
    \dim T^0(C(X))_0 = 1
    \]
    (one dimensional space spanned by the Euler derivation). If one
    restricts to derivations coming from \(X\) itself, then
    \(\dim H^0(X,T_X)=0\) for the generic \(X\).
\end{enumerate}

Thus, for the intrinsic piece we have
\[
{
\dim T^0(C(X))_m =
\begin{cases}
10m, & m\ge 1,\\[4pt]
0, & m\le 0.
\end{cases}
}
\]
If one  include the Euler derivation at degree \(0\), then one
replaces the value \(0\) at \(m=0\) by \(1\).

\subsection{The space \(T^1(C(X))_m\)}

Similarly, we have
\[
T^1(C(X))_m \cong H^1(X,F_m),\qquad F_m=T_X\otimes L^{\otimes m}.
\]

\begin{enumerate}
    \item If \(m>0\), then \(\deg(F_m)=10m>0\). As before,
    \[
    H^1(X,F_m)=0,
    \]
    so
    \[
    \dim T^1(C(X))_m = 0,\qquad m\ge 1.
    \]
    \item If \(m<0\), then \(\deg(F_m)=10m<0\), and \(H^0(X,F_m)=0\). From
    Riemann--Roch,
    \[
    h^0(X,F_m)-h^1(X,F_m)=\deg(F_m)=10m,
    \]
    so
    \[
    0 - h^1(X,F_m) = 10m \quad\Longrightarrow\quad
    h^1(X,F_m) = -\,10m.
    \]
    Therefore,
    \[
    \dim T^1(C(X))_m = -10m,\qquad m\le -1.
    \]
    \item If \(m=0\), then \(F_0=T_X\) and, under our genericity assumption,
    \[
    H^0(X,T_X)=0,\quad H^1(X,T_X)=0.
    \]
    Thus,
    \[
    \dim T^1(C(X))_0 = 0
    \]
    for the intrinsic piece. (Depending on how one treats embedded deformations
    of \(X\subset\mathbb P^9\), \(T^1(C(X))_0\) can pick up a contribution
    from \(H^0(X,N_{X/\mathbb P^9})\); we are not including that here, since  we are treating
     the deformations coming from the cone structure itself.)
\end{enumerate}

So, the graded pieces of \(T^1\) are:

\[
{
\dim T^1(C(X))_m =
\begin{cases}
0, & m\ge 0,\\[4pt]
-10m, & m\le -1.
\end{cases}
}
\]

\subsection{Geometric interpretation.}

\begin{enumerate}
    \item All \emph{positive} graded pieces of \(T^1\) vanish: there are no
    nontrivial \(\mathbb G_m\)-equivariant deformations of the cone in positive
    degrees.
    \item All the \(\mathbb G_m\)-equivariant deformations of the cone are
    encoded in the \emph{negative} degrees \(m\le -1\), and their dimensions
    grow linearly with \(-m\).
    \item The graded pieces of \(T^0\) in positive degrees have dimension
    \(10m\), reflecting homogeneous vector fields of degree \(m\) on the cone,
    and in degree zero we have at least the Euler derivation.
\end{enumerate}

This completely determines the graded tangent cohomology of the cone
\(C(X)\) in terms of the line bundle \(L\) of multidegree \((4,3,3)\) that
embeds \(X\) into \(\mathbb P^9\).


\vspace{0.3cm}

\section{\(\mathbb{G}_m\)-Equivariant Deformations of the Cone}

We now explain \emph{how the signs of the degrees \(m\)} in the graded
modules \(T^1(C(X))_m\) correspond to \(\mathbb{G}_m\)-equivariant
deformations of the cone and how they affect the singularity at the vertex.

\subsection{The meaning of grading}
Since the cone
\(
C(X)=\mathrm{Spec}\Bigl( \bigoplus_{m\ge 0} H^0(X,L^{\otimes m}) \Bigr)
\)
has a natural \(\mathbb{G}_m\)-action given by rescaling the homogeneous
coordinates, its deformation theory decomposes into \emph{weight spaces}.

A deformation class in \(T^1(C(X))_m\) corresponds to a first-order deformation
\[
\widetilde{C(X)} \to C(X)
\]
together with a compatible \(\mathbb{G}_m\)-action of weight \(m\).

The degree \(m\) dictates how the deformation transforms under scaling:
\[
t\cdot \varepsilon = t^m \varepsilon.
\]

\subsection{Interpretation of the degrees}
The classical Pinkham theory (``deformations with \(\mathbb{G}_m\)-action'') says:

\begin{itemize}
\item \textbf{Negative weights \(m<0\):}  
These correspond to \emph{smoothing directions of the cone vertex}.  
Intuitively, negative weights ``add terms of smaller degree'' to the defining equations of the cone, which destroy the conical shape near the vertex.

\item \textbf{Weight \(m=0\):}  
These correspond to \emph{equisingular deformations that preserve the cone structure at the vertex}.  
No smoothing occurs; the vertex remains a singular point.  
(For our cone over the curve $X$ with $L$ of multidegree $(4,3,3)$, the intrinsic contribution to \(T^1_0\) is \(0\).)

\item \textbf{Positive weights \(m>0\):}  
These correspond to deformations that modify the embedding of \(X\) or the projective cone at infinity, but \emph{do not affect the affine cone near the vertex}.
For our cone, all such \(T^1_m=0\), so no such deformations exist.
\end{itemize}

\subsection{Consequences for our cone}

We computed:
\[
T^1(C(X))_m =
\begin{cases}
0, & m\ge 0,\\[4pt]
-10m, & m\le -1.
\end{cases}
\]

Therefore, we get:

\begin{itemize}
\item[(i)] \({T^1_m = 0 \text{ for } m\ge 0}\):

No equisingular or embedding-type equivariant deformations exist.
The cone has \emph{no non-smoothing} \(\mathbb{G}_m\)-equivariant deformations.

\item[(ii)] \({T^1_m \neq 0 \text{ only for } m<0}\):

All equivariant deformations are \emph{smoothing} of the vertex.
The dimension of the smoothing space in weight \(m\) is:
\(
\dim T^1_m = -10m.
\)

\item[(iii)] In particular, the \emph{miniversal} \(\mathbb{G}_m\)-equivariant deformation has a smoothing space of infinite dimension (one finite-dimensional graded piece for each negative integer).
\end{itemize}

\subsection{Geometric meaning}

A cone singularity is rigid \emph{as a cone} if and only if
\[
T^1_m = 0 \text{ for all } m < 0.
\]
For our curve:
\[
T^1_m \cong H^1(X,T_X\otimes L^m) \neq 0
\text{ for all } m<0.
\]
Hence, \textbf{The cone over \(X\subset\mathbb{P}^9\) is never rigid and always smoothable.}
The amount of smoothing in weight \(m\) grows linearly with \(-m\),
with slope equal to \(\deg(L)=10\).

\section{Moduli of $X$ and $(X,L)$}
\subsection{Moduli of the curve $X$ as a nodal curve of genus $1$}

We first discuss the moduli of $X$ as an abstract curve (disregarding $L$).

\subsubsection*{Combinatorial type and rigidity}

The dual graph of $X$ is fixed, and each irreducible component $C_i$ is $\mathbb P^1$.  
Let us examine the moduli of such curves up to isomorphism.

\begin{proposition}\label{prop:curve-rigid}
Up to isomorphism of curves, there is essentially a unique nodal curve $X$ of 
the above form: a union of three copies of $\mathbb P^1$ meeting in three nodes 
with the dual graph described above.  
Equivalently, the locus in the moduli stack of nodal curves of genus $1$ with 
this combinatorial type is $0$-dimensional.
\end{proposition}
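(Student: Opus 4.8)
The plan is to show that once the dual graph $\Gamma_X$ is fixed there are no continuous parameters left, by unwinding the construction of $X$ through its normalization: the only data are the positions of the gluing points on three copies of $\mathbb P^1$, and every such configuration is rigid under $\mathrm{Aut}(\mathbb P^1)=\mathrm{PGL}_2(k)$.

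First I would record the reduction to pointed genus-zero curves. Producing a curve of the prescribed combinatorial type amounts to choosing pointed curves $(C_1;a_1,a_2)$, $(C_2;b_1,b_2,b_3)$, $(C_3;c_1)$ with each $C_i\cong\mathbb P^1$, and then forming the pushout of $\widetilde X\longleftarrow\{6\text{ points}\}\longrightarrow\{3\text{ points}\}$ that glues $a_1$ to $c_1$, $a_2$ to $b_2$, and $b_1$ to $b_3$. Gluing along reduced points introduces no moduli: a node is $\Spec k[x,y]/(xy)$, which is rigid, so if two such collections of pointed components are related by isomorphisms respecting the gluing pattern prescribed by the edges of $\Gamma_X$ (allowing a graph automorphism), the resulting pushouts are isomorphic. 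Hence the isomorphism class of $X$ depends only on the point of $\mathcal M_{0,2}\times\mathcal M_{0,3}\times\mathcal M_{0,1}$ determined by the three pointed lines, modulo the finite group $\mathrm{Aut}(\Gamma_X)$, which here is just $\mathbb Z/2$ (the flip $b_1\leftrightarrow b_3$ of the two half-edges of the loop at $v_2$).

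Next I would observe that each of these three moduli factors is a single point. Since $\mathrm{PGL}_2(k)$ acts transitively on ordered pairs of distinct points of $\mathbb P^1$, on ordered triples of distinct points, and on points, I may normalize $a_1=0,\ a_2=\infty$ on $C_1$, then $b_1=0,\ b_2=1,\ b_3=\infty$ on $C_2$, then $c_1=0$ on $C_3$; after this normalization no free parameter remains, so any two curves of the prescribed form are isomorphic. Equivalently, $\dim\mathcal M_{0,n}=n-3\le 0$ for $n\in\{1,2,3\}$, so the product $\mathcal M_{0,2}\times\mathcal M_{0,3}\times\mathcal M_{0,1}$ and its quotient by $\mathrm{Aut}(\Gamma_X)$ are $0$-dimensional, and the image of the associated gluing map in the moduli stack of nodal genus-$1$ curves is precisely the locus of curves with dual graph $\Gamma_X$: a single closed point. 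This yields both the uniqueness assertion and the $0$-dimensionality of the stratum.

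There is no serious obstacle here; the content is just the rigidity of $\mathcal M_{0,n}$ for $n\le 3$ together with the absence of gluing moduli at a node. The one point I would be careful to state explicitly is that $0$-dimensionality of the stratum is entirely compatible with $X$ carrying a positive-dimensional automorphism group: the components $C_1$ and $C_3$ support only two and one special points respectively, so $\mathrm{Aut}(X)$ contains the positive-dimensional stabilizers $\operatorname{Stab}_{\mathrm{PGL}_2}(\{a_1,a_2\})$ and $\operatorname{Stab}_{\mathrm{PGL}_2}(c_1)$. This reflects only that $X$ fails to be stable; it produces no deformations, since an equisingular deformation preserving $\Gamma_X$ would have to move the normalized (hence rigid) gluing points, and there is nowhere for them to go.
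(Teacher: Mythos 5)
Your argument is correct and follows essentially the same route as the paper: normalize the marked points on each component using the $3$-transitivity of $\mathrm{PGL}_2$ (putting $a_1=0,a_2=\infty$ on $C_1$, $b_1=0,b_2=1,b_3=\infty$ on $C_2$, $c_1=0$ on $C_3$) and conclude that no continuous gluing parameters remain. Your added remarks on the pushout description of the gluing, the finite group $\mathrm{Aut}(\Gamma_X)$, and the positive-dimensional automorphisms coming from the unstable components $C_1$ and $C_3$ are accurate refinements but do not change the substance of the proof.
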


\begin{proof}
We argue by normalizing each component and quotienting by automorphisms.

\medskip

\noindent
\emph{Component $C_1$.}  
The curve $C_1\simeq\mathbb P^1$ carries two special points $a_1,a_2$ (the 
nodes).  
The automorphism group $\operatorname{Aut}(\mathbb P^1)\cong\mathrm{PGL}_2$ acts 
$3$-transitively on $\mathbb P^1$, and hence transitively on ordered pairs of 
distinct points.  Thus, modulo automorphisms, there is no continuous parameter 
associated to the unordered pair $\{a_1,a_2\}$.  
We can choose coordinates so that
\[
a_1 = 0,\qquad a_2 = \infty.
\]

\noindent
\emph{Component $C_3$.}  
The component $C_3$ has a single special point $c_1$ coming from the node 
$a_1\sim c_1$.  
Again $\mathrm{PGL}_2$ acts transitively on $\mathbb P^1$, so we may use an 
automorphism of $C_3$ to put
\[
c_1 = 0.
\]

\noindent
\emph{Component $C_2$.}  
The component $C_2$ has three special points $b_1,b_2,b_3$.  
Up to automorphism of $\mathbb P^1$, any triple of distinct points is 
isomorphic to $\{0,1,\infty\}$.  
Indeed, the group $\mathrm{PGL}_2$ acts $3$-transitively on $\mathbb P^1$.  
Thus we may normalize
\[
b_1 = 0,\qquad b_2 = 1,\qquad b_3 = \infty.
\]

Under these normalizations, the gluing data
\[
a_1\sim c_1,\qquad a_2\sim b_2,\qquad b_1\sim b_3
\]
are completely prescribed and involve no continuous moduli.  
Hence any two such curves are isomorphic, so the moduli space of nodal curves 
with this combinatorial type consists of a single point (at the level of 
coarse moduli; the stack may still have a finite automorphism group).
\end{proof}

Thus, as an abstract nodal curve (with fixed combinatorial type), $X$ is rigid: 
there are no deformations that preserve the combinatorial structure and the 
number of nodes.  

On the other hand, if we allow \emph{smoothing} of the nodes, $X$ does deform 
inside the moduli stack $\overline{\mathcal M}_1$ of stable curves of genus $1$.  
Infinitesimally, deformations of the curve are governed by
\[
\mathrm{Ext}^1(\Omega_X,\mathcal O_X),
\]
which splits into \emph{locally trivial} deformations (measured by $H^1(X,T_X)$)
and deformations coming from smoothing the nodes (measured by the local 
$T^1$ at each node).  
In our situation, one checks that
\[
H^1(X,T_X) = 0
\]
for a generic choice of gluing points, so all deformations come from smoothing 
some subset of the three nodes.  
This produces a $3$-dimensional space of first-order deformations at the level 
of the deformation functor of $X$, which is then quotiented by the (finite) 
automorphism group when passing to the moduli stack.

\subsection{The moduli of the pair $(X,L)$}

We now fix the curve $X$ as above and consider line bundles $L$ on $X$ of fixed 
multidegree $(4,3,3)$.

\subsubsection*{The generalized Jacobian}

The Picard scheme $\mathrm{Pic}(X)$ of a connected projective curve of 
arithmetic genus $1$ has dimension $1$.  
It can be described as a \emph{generalized Jacobian}, which for a nodal genus 
one curve is an extension of an elliptic curve (in the smooth case) or a 
one-dimensional torus $\mathbb G_m$ (in the rational case) by a finite group.  
In our situation, $X$ is a genus one curve whose normalization is a union of 
three copies of $\mathbb P^1$, so its Jacobian is isomorphic to $\mathbb G_m$:
\[
\mathrm{Jac}(X) \cong \mathbb G_m.
\]

For each fixed multidegree $\underline d=(d_1,d_2,d_3)$ with $d_1+d_2+d_3$ fixed, 
the set of isomorphism classes of line bundles of multidegree $\underline d$ is 
a torsor under $\mathrm{Jac}(X)$.  
Thus the moduli space of pairs $(X,L)$, with $X$ as above and 
$\underline{\deg}(L)=(4,3,3)$, is $1$-dimensional:

\begin{proposition}\label{prop:pair-moduli}
Fix the curve $X$ described above.  
Then the set of isomorphism classes of line bundles of multidegree $(4,3,3)$ on 
$X$ is a principal homogeneous space under $\mathrm{Jac}(X)\cong\mathbb G_m$.  
In particular, the moduli space of pairs $(X,L)$ with $X$ fixed and 
$\underline{\deg}(L)=(4,3,3)$ is one-dimensional.
\end{proposition}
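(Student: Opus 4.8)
The plan is to compute $\mathrm{Pic}(X)$ through the normalization $\nu:\widetilde X\to X$ and then exhibit $\mathrm{Pic}^{(4,3,3)}(X)$ as a torsor under $\mathrm{Jac}(X)=\mathrm{Pic}^{0}(X)$. Recall $\widetilde X=C_1\sqcup C_2\sqcup C_3$ and the three nodes $p_{13}=a_1\sim c_1$, $p_{12}=a_2\sim b_2$, $p_{22}=b_1\sim b_3$. First I would use the standard exact sequence of sheaves of abelian groups on $X$,
\[
1\longrightarrow \mathcal O_X^{*}\longrightarrow \nu_*\mathcal O_{\widetilde X}^{*}\longrightarrow \mathcal Q\longrightarrow 1,
\]
where $\mathcal Q$ is a skyscraper supported at the three nodes with stalk $\mathbb G_m$ at each (locally at a node a unit of $\nu_*\mathcal O_{\widetilde X}^{*}$ is a pair of nonzero values on the two branches, descending to $X$ exactly when they agree, and the quotient records their ratio). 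Since $\nu$ is finite, $H^i(X,\nu_*\mathcal O_{\widetilde X}^{*})=H^i(\widetilde X,\mathcal O_{\widetilde X}^{*})$, and $H^1(X,\mathcal Q)=0$; taking cohomology yields the exact sequence of abelian groups
\[
1\to k^{*}\to (k^{*})^{3}\xrightarrow{\ \delta\ } (k^{*})^{3}\to \mathrm{Pic}(X)\xrightarrow{\ \underline{\deg}\ }\mathbb Z^{3}\to 0,
\]
using $H^0(X,\mathcal O_X^{*})=k^{*}$ (as $X$ is connected and reduced), $H^0(\widetilde X,\mathcal O_{\widetilde X}^{*})=(k^{*})^{3}$ (a nonzero constant on each component), $H^0(X,\mathcal Q)=(k^{*})^{3}$ (one factor per node), and $H^1(\widetilde X,\mathcal O_{\widetilde X}^{*})=\mathrm{Pic}(\mathbb P^1)^{3}=\mathbb Z^{3}$, the last arrow being the multidegree map. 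Equivalently, one may describe a line bundle on $X$ concretely as a line bundle on $\widetilde X$ equipped with gluing data in $(k^{*})^{3}$ at the nodes, taken modulo the $(k^{*})^{3}$ of component-wise rescalings.

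Next I would make $\delta$ explicit: writing a unit of $\widetilde X$ as $(\lambda_1,\lambda_2,\lambda_3)$,
\[
\delta(\lambda_1,\lambda_2,\lambda_3)=\bigl(\lambda_1\lambda_3^{-1},\ \lambda_1\lambda_2^{-1},\ \lambda_2\lambda_2^{-1}\bigr)=\bigl(\lambda_1\lambda_3^{-1},\ \lambda_1\lambda_2^{-1},\ 1\bigr).
\]
The key observation is that the third coordinate is identically $1$: the self-node $p_{22}$ has both branches on the same component $C_2$, so a constant on $C_2$ has equal values at $b_1$ and $b_3$ — this is the loop of $\Gamma_X$ failing to impose any condition. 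Hence $\mathrm{im}(\delta)=(k^{*})^{2}\times\{1\}$ (the first two coordinates already surject onto $(k^{*})^{2}$), so
\[
\mathrm{coker}(\delta)\;\cong\;(k^{*})^{3}\big/\bigl((k^{*})^{2}\times\{1\}\bigr)\;\cong\;k^{*}=\mathbb G_m,
\]
while $\ker(\delta)$ is the diagonal, consistent with $k^{*}=H^0(X,\mathcal O_X^{*})\hookrightarrow(k^{*})^{3}$. By exactness, $\mathrm{coker}(\delta)\cong\ker(\underline{\deg})=\mathrm{Pic}^{(0,0,0)}(X)=\mathrm{Jac}(X)$, so $\mathrm{Jac}(X)\cong\mathbb G_m$; its dimension is $1=\#\{\text{nodes}\}-\#\{\text{components}\}+1=b_1(\Gamma_X)$, in accord with $p_a(X)=1$.

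Finally I would conclude the torsor statement and dimension. The multidegree map $\underline{\deg}:\mathrm{Pic}(X)\to\mathbb Z^{3}$ is surjective by the exact sequence above (or simply glue $\mathcal O_{C_1}(4)\boxplus\mathcal O_{C_2}(3)\boxplus\mathcal O_{C_3}(3)$ along the nodes, as in the Examples section), so $\mathrm{Pic}^{(4,3,3)}(X)=\underline{\deg}^{-1}\bigl((4,3,3)\bigr)$ is nonempty; being a nonempty coset of $\ker(\underline{\deg})=\mathrm{Jac}(X)$ inside the group (scheme) $\mathrm{Pic}(X)$, it is a principal homogeneous space under $\mathrm{Jac}(X)\cong\mathbb G_m$. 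A $\mathbb G_m$-torsor over $\operatorname{Spec}k$ is trivial (as $\mathrm{Pic}(\operatorname{Spec}k)=0$), so $\mathrm{Pic}^{(4,3,3)}(X)\cong\mathbb G_m$ as a variety, hence one-dimensional; with $X$ held fixed, this is precisely the moduli space of pairs $(X,L)$ with $\underline{\deg}(L)=(4,3,3)$, which is therefore one-dimensional, as claimed.

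The only step with genuine content is the explicit description of $\delta$ and the resulting rank drop caused by the loop in the dual graph; I expect that to be the main obstacle (such as it is). Everything downstream is formal: surjectivity of $\underline{\deg}$, the fact that a nonempty coset of a subgroup is a torsor under it, and the triviality of $\mathbb G_m$-torsors over a point. (If one wished to parametrize pairs up to isomorphism one would further quotient by $\mathrm{Aut}(X)$; since the statement fixes $X$, no such quotient is taken here.)
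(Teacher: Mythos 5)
Your proof is correct, and it takes a genuinely more explicit route than the paper, which offers no formal proof of this proposition: the paper simply appeals to the general structure theory of generalized Jacobians (a connected nodal curve of arithmetic genus $1$ with rational normalization has $\mathrm{Jac}(X)\cong\mathbb G_m$, and the fixed-multidegree locus is a torsor under it). You instead derive everything from the unit exact sequence $1\to\mathcal O_X^{*}\to\nu_*\mathcal O_{\widetilde X}^{*}\to\mathcal Q\to 1$, identify $\mathrm{Jac}(X)=\ker(\underline{\deg})\cong\mathrm{coker}(\delta)$, and locate the surviving $\mathbb G_m$ precisely in the loop of the dual graph: at the self-node $b_1\sim b_3$ the gluing constant cannot be absorbed by component-wise rescaling, while at the two separating nodes it can. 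This buys a self-contained computation, an explicit coordinate on the torsor (the gluing parameter at the self-node), and a transparent match with $b_1(\Gamma_X)=1$ and $p_a(X)=1$; the paper's route is shorter but leaves $\mathrm{Jac}(X)\cong\mathbb G_m$ as an assertion. Your downstream steps --- surjectivity of $\underline{\deg}$ from the five-term sequence, the observation that a nonempty coset of $\ker(\underline{\deg})$ is a principal homogeneous space, the triviality of $\mathbb G_m$-torsors over $\operatorname{Spec}k$, and the remark that no quotient by $\mathrm{Aut}(X)$ is taken since $X$ is fixed --- are all correctly handled.
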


In the body of your work you fix a particular choice of $L$ in this torsor 
and prove that it is very ample and defines the embedding into $\mathbb P^9$.

\subsection{Moduli of the cone $C(X)$ as a singularity}

We now briefly discuss the local moduli of the cone $C(X)$ as a surface 
singularity.  
A precise notion of ``moduli space'' here requires some care: one usually works 
with the \emph{semi-universal deformation} (or miniversal deformation) of the 
singularity, whose base space is formally smooth of dimension equal to 
$\dim T^1_{C(X)}$, where
\[
T^1_{C(X)} = \mathrm{Ext}^1(\Omega_{C(X)},\mathcal O_{C(X)})
\]
is the space controlling first-order deformations.

Since $C(X)$ is a graded (affine) cone, its deformation theory admits a 
graded refinement:
\[
T^1_{C(X)} = \bigoplus_{m\in\mathbb Z} T^1_{C(X)}(m),
\]
where $T^1_{C(X)}(m)$ is the subspace of degree $m$ with respect to the 
$\mathbb G_m$-action on $C(X)$.
Pinkham's theory shows that these graded pieces can be expressed in terms of 
cohomology groups of certain line bundles $F_m$ on $X$.

\subsubsection*{The bundles $F_m$ and their cohomology}

For each integer $m$, there is a line bundle $F_m$ on $X$ such that:
\[
\deg(F_m)=10m,
\]
and
\[
T^1_{C(X)}(m) \cong
\begin{cases}
H^1(X,F_m)^\vee & m>0,\\[4pt]
H^0(X,F_m) & m<0,\\[4pt]
\text{a subquotient of } H^0(X,T_X)\oplus H^1(X,T_X)^\vee & m=0.
\end{cases}
\]
Using Riemann--Roch and Serre duality on the genus-one nodal curve $X$, and 
using the fact that the multidegree of $F_m$ is proportional to $m$, one 
obtains:

\begin{theorem}\label{thm:cohomology-Fm}
For the line bundles $F_m$ on $X$ arising from the graded structure of $C(X)$, 
we have:
\[
H^0(X,F_m)=0,\quad H^1(X,F_m)=-10m \quad\text{for } m<0,
\]
\[
H^1(X,F_m)=0,\quad H^0(X,F_m)=10m \quad\text{for } m>0.
\]
Moreover, for $m=0$ one has $F_0=T_X$ and
\[
h^0(X,T_X)-h^1(X,T_X)=0.
\]
For a generic choice of gluing points, the automorphism group 
$\mathrm{Aut}(X)$ is finite, and therefore
\[
H^0(X,T_X)=0,\qquad H^1(X,T_X)=0.
\]
\end{theorem}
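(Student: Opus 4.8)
The plan is to split the statement according to the sign of $m$ and reduce each case to Riemann--Roch and Serre duality on the genus-one Gorenstein curve $X$; the crucial preliminary is to determine the multidegree of $F_m$ \emph{componentwise}, since on a reducible curve $\deg F>0$ by itself does not force $H^1(X,F)=0$. For the dualizing sheaf one has $\omega_X|_{C_i}\cong\mathcal O_{C_i}(-2+n_i)$, where $n_i$ is the number of points of $C_i$ lying over nodes; since $n_1=2$ (namely $a_1,a_2$), $n_2=3$ (namely $b_1,b_2,b_3$) and $n_3=1$ (namely $c_1$), this gives $\underline{\deg}(\omega_X)=(0,1,-1)$, hence $\underline{\deg}(T_X)=\underline{\deg}(\omega_X^{-1})=(0,-1,1)$ and, since $\underline{\deg}(L)=(4,3,3)$,
\[
\underline{\deg}(F_m)=(4m,\ 3m-1,\ 3m+1),
\]
in agreement with $\deg F_m=10m$ and the Riemann--Roch identity $\chi(F_m)=h^0(X,F_m)-h^1(X,F_m)=10m$.

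For $m\le -1$ every entry of $\underline{\deg}(F_m)$ is negative; by the normalization sequence a global section of $F_m$ restricts on each $C_i\cong\mathbb P^1$ to a section of a negative-degree line bundle, hence is $0$, so $H^0(X,F_m)=0$ and then $h^1(X,F_m)=-10m$. For $m\ge 1$ every entry of $\underline{\deg}(F_m)$ is positive, and I would compute $H^1$ by Serre duality: using $T_X^{-1}\cong\omega_X$,
\[
H^1(X,F_m)^{\vee}\cong H^0\!\bigl(X,\ \omega_X\otimes F_m^{-1}\bigr)=H^0\!\bigl(X,\ \omega_X^{\otimes 2}\otimes L^{-m}\bigr),
\]
and $\omega_X^{\otimes 2}\otimes L^{-m}$ has multidegree $(-4m,\ 2-3m,\ -2-3m)$, all entries negative for $m\ge 1$, so the same restriction argument gives $H^1(X,F_m)=0$; Riemann--Roch then yields $h^0(X,F_m)=10m$.

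For $m=0$ we have $F_0=T_X$ with $\underline{\deg}(T_X)=(0,-1,1)$ and $\chi(T_X)=0$, so Riemann--Roch alone only gives $h^0(X,T_X)=h^1(X,T_X)$. To finish I would invoke the genericity hypothesis already in place: for a generic choice of the gluing points $X$ has no nonzero global infinitesimal automorphism, i.e.\ $H^0(X,T_X)=0$, and then $H^1(X,T_X)=0$ follows at once from $\chi(T_X)=0$. Putting the three ranges together gives the claimed cohomology of all the $F_m$.

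The step I expect to cause the real difficulty is the case $m=0$. For $m\neq 0$ everything is driven by the constant sign of each entry of $\underline{\deg}(F_m)$, which is precisely why the componentwise computation of the first step is required rather than a bare appeal to total degree; the numbers $(4,3,3)$ together with $\underline{\deg}(T_X)=(0,-1,1)$ are what make the vanishing go through. By contrast, the vanishing $H^0(X,T_X)=H^1(X,T_X)=0$ in degree $0$ is not a consequence of numerical data: it rests entirely on the genericity assumption on the gluing (equivalently, on the finiteness of $\mathrm{Aut}(X)$ for generic gluing), which is the one genuinely non-formal ingredient in the proof.
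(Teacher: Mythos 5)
Your proposal is correct and follows the same overall skeleton as the paper's argument (split by the sign of $m$, then Riemann--Roch plus Serre duality for $m\neq 0$, and the genericity hypothesis for $m=0$), but the key vanishing step is handled differently, and your version is the more careful one. The paper deduces $H^1(X,F_m)=0$ for $m>0$ from the single implication ``$\deg G<0\Rightarrow H^0(X,G)=0$'' applied to $G=\omega_X\otimes F_m^{-1}$, using only the \emph{total} degree $\deg F_m=10m$; on a reducible curve that implication is false in general (a line bundle of negative total degree can have sections coming from a component on which its restriction has nonnegative degree), so the paper's argument as written is incomplete exactly where you flag the issue. Your componentwise computation $\underline{\deg}(\omega_X)=(0,1,-1)$, hence $\underline{\deg}(F_m)=(4m,\,3m-1,\,3m+1)$ and $\underline{\deg}(\omega_X\otimes F_m^{-1})=(-4m,\,2-3m,\,-2-3m)$, combined with the injection $H^0(X,G)\hookrightarrow\bigoplus_i H^0(C_i,G|_{C_i})$ from the normalization sequence, closes that gap: every entry has the required sign for all $m\neq 0$, which is the genuinely relevant numerical fact, and Riemann--Roch then gives the stated dimensions. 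The $m=0$ case is treated identically in both arguments: $\chi(T_X)=0$ together with the assumption that a generic gluing yields no nonzero global vector fields. (Both you and the paper take on faith the identification $T_X\cong\omega_X^{-1}$ as a line bundle and the passage from finiteness of $\mathrm{Aut}(X)$ to $H^0(X,T_X)=0$; these are standing assumptions of the surrounding text rather than things either proof establishes.)
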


The last statement (for $m=0$) implies in particular:

\begin{theorem}[Degree-zero graded deformations vanish]\label{thm:T1zero}
For a generic choice of the gluing points $a_i,b_j,c_1$, the degree-zero 
graded piece $T^1_{C(X)}(0)$ vanishes.  
Equivalently, the cone $C(X)$ admits no nontrivial $\mathbb G_m$-equivariant 
first-order deformations of degree $0$.
\end{theorem}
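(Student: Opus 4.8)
The plan is to derive Theorem~\ref{thm:T1zero} as a formal consequence of the $m=0$ case of Theorem~\ref{thm:cohomology-Fm}. Recall that, unlike the pieces with $m\neq 0$, the weight-zero piece $T^1_{C(X)}(0)$ is not literally a cohomology group of $F_0=T_X$: Pinkham's description (Theorem~\ref{Graded-pieces}) places it in an exact sequence involving the intrinsic cohomology $H^0(X,T_X)$, $H^1(X,T_X)$ and the embedded deformations of $X\subset\mathbb{P}^9$, with $H^1(X,T_X)$ occurring as a direct summand and $H^0(X,T_X)$ entering through the accompanying exact sequence. Under the standing genericity hypothesis adopted just before Theorem~\ref{Graded-pieces} — that the embedding $X\subset\mathbb{P}^9$ is general enough that no extra linear automorphisms or embedded deformations contribute in weight $0$ — the projective contribution drops out, leaving exactly the description recorded in the $m=0$ clause of Theorem~\ref{thm:cohomology-Fm}: $T^1_{C(X)}(0)$ is a subquotient of $H^0(X,T_X)\oplus H^1(X,T_X)^{\vee}$.

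Granting this, the write-up has three short steps. First I would invoke the $m=0$ part of Theorem~\ref{thm:cohomology-Fm}: for a generic choice of the gluing points $a_1,a_2,b_1,b_2,b_3,c_1$ the group $\operatorname{Aut}(X)$ is finite, hence $H^0(X,T_X)=0$; and since $\deg T_X=0$ and $p_a(X)=1$ give $\chi(X,T_X)=\deg T_X+1-p_a(X)=0$, Riemann--Roch forces $H^1(X,T_X)=0$ as well, so both summands vanish. Second, I would feed these two vanishings into the degree-zero description above: $T^1_{C(X)}(0)$ is a subquotient of $H^0(X,T_X)\oplus H^1(X,T_X)^{\vee}=0$, hence $T^1_{C(X)}(0)=0$. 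Third, I would translate back: by Pinkham's correspondence the weight-zero piece $T^1_{C(X)}(0)$ is exactly the space of $\mathbb{G}_m$-equivariant first-order deformations of $C(X)$ of degree $0$, so its vanishing says that $C(X)$ admits no nontrivial such deformation — which is item (3) of Theorem~\ref{main-theorem}. One may also phrase the conclusion on the base: a weight-zero equivariant deformation of the cone corresponds to an equisingular deformation of the polarized curve $(X,L)$, and $X$ is rigid with its fixed combinatorial type by Proposition~\ref{prop:curve-rigid}, consistently with $H^1(X,T_X)=0$.

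The real content lies upstream, in Theorem~\ref{thm:cohomology-Fm}, and that is where the only genuine obstacle sits: proving that $\operatorname{Aut}(X)$ is finite for generic gluing data — the place where the positions of the marked points on the $C_i$ actually enter, and which rests on the rigidity of $X$ as a nodal curve of this combinatorial type — and justifying that the embedded deformations of $X\subset\mathbb{P}^9$ add nothing in degree $0$, so that $T^1_{C(X)}(0)$ really is the intrinsic subquotient. Both points are already in place in the preceding sections, so once they are granted the present statement is immediate; in writing it up I would merely assemble the relevant citations and spell out the one-line Euler-characteristic computation $\chi(T_X)=0$ that upgrades $H^0(X,T_X)=0$ to $H^1(X,T_X)=0$.
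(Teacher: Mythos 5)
Your proposal is correct and follows essentially the same route as the paper: the paper derives Theorem~\ref{thm:T1zero} directly from the $m=0$ clause of Theorem~\ref{thm:cohomology-Fm}, namely $H^0(X,T_X)=0$ for generic gluing (finiteness of $\operatorname{Aut}(X)$) upgraded to $H^1(X,T_X)=0$ via $\chi(T_X)=0$, combined with the standing assumption that $T^1_{C(X)}(0)$ reduces to the intrinsic subquotient of $H^0(X,T_X)\oplus H^1(X,T_X)^{\vee}$. You have merely spelled out the same deduction in slightly more detail, so no substantive difference.
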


Thus, in the graded deformation theory, all nontrivial deformations of $C(X)$ 
occur in positive or negative degree.  

\begin{remark}
From the ``moduli'' point of view, the base space of the semi-universal 
\emph{graded} deformation of $C(X)$ is a formal scheme with tangent space
\[
\bigoplus_{m\ne 0} T^1_{C(X)}(m),
\]
and the vanishing of $T^1(0)$ means that there is no distinguished one-parameter 
family corresponding to regrading or to deformations preserving the $\mathbb G_m$-action 
in degree $0$.  
The negative-degree directions (in particular $m=-1$) give smoothings of the 
cone, while the positive-degree directions correspond to embedded deformations 
of the curve $X\subset\mathbb P^9$.
\end{remark}

\subsection{Summary of the moduli picture.} In this section and in the preceding sections we proved the following:

\begin{enumerate}[(a)]
  \item As an abstract nodal curve of genus $1$ with the given dual graph, $X$ 
        is rigid (Proposition~\ref{prop:curve-rigid}); all deformations in the 
        moduli of genus-one curves come from smoothing the nodes.
  \item For fixed $X$, the moduli of line bundles $L$ of multidegree $(4,3,3)$ 
        is one-dimensional, a torsor under the generalized Jacobian 
        $\mathrm{Jac}(X)\cong\mathbb G_m$ 
        (Proposition~\ref{prop:pair-moduli}).
  \item For the cone $C(X)$, the graded deformation space $T^1_{C(X)}$ has 
        no degree-zero component (Theorem~\ref{thm:T1zero}), and the 
        positive/negative degree pieces are computed cohomologically 
        (Theorem~\ref{thm:cohomology-Fm}).  
        The negative-degree pieces parametrize explicit $\mathbb G_m$-equivariant 
        smoothings of the cone.
\end{enumerate}

Therefore, the moduli of this cone, understood via its graded deformation theory, 
are tightly controlled by the geometry and cohomology of the underlying nodal 
curve $X$ and the line bundle $L$ of multidegree $(4,3,3)$.

Hence, we have proved  the following theorem:
\begin{theorem}\label{main-theorem}
Let \(L\) be a line bundle on \(X\) with multidegree
\(
\underline{\deg}(L) = (4,3,3),
\) where $X=$,
and  let $C(X)$ be the cone over $X$. Then the following holds:
\[
{
\dim T^0(C(X))_m =
\begin{cases}
10m, & m\ge 1,\\[4pt]
0, & m\le 0,
\end{cases}}
\qquad
{
\dim T^1(C(X))_m =
\begin{cases}
0, & m\ge 0,\\[4pt]
-10m, & m\le -1.
\end{cases}}
\]
\begin{enumerate}
\item All nonzero \(T^1_m\) occur in negative degrees:  
\[
T^1_{-1},T^1_{-2},\dots
\]
These give all possible \(\mathbb{G}_m\)-equivariant smoothings of the cone vertex.

\item No positive-degree deformations exist.  
\item No equisingular deformations exist in degree \(0\).
\item The vertex of the cone is \emph{smoothable in infinitely many independent ways}.
\end{enumerate}
 \end{theorem}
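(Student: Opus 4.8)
The plan is to derive the two displayed dimension formulas by reducing everything to cohomology of the line bundles $F_m=T_X\otimes L^{\otimes m}$ on $X$, and then to read off items (1)--(4) from the interpretation of the $\mathbb{G}_m$-grading already recalled. First I would invoke Theorem~\ref{Graded-pieces}: for every $m\neq 0$ it identifies $T^0(C(X))_m\cong H^0(X,F_m)$ and $T^1(C(X))_m\cong H^1(X,F_m)$, while for $m=0$ it relates $T^i(C(X))_0$ to $H^i(X,T_X)$ (and to embedded deformations of $X\subset\mathbb{P}^9$) through short exact sequences. Under the genericity hypothesis on the gluing points, $\mathrm{Aut}(X)$ is finite, so $H^0(X,T_X)=0$, and since $\chi(T_X)=\deg T_X=0$ this also forces $H^1(X,T_X)=0$; hence the degree-zero correction terms vanish and (working with the intrinsic part, so that the Euler derivation is not counted) we may treat $m=0$ on the same footing as $m\neq 0$. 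It then remains only to compute $h^0(X,F_m)$ and $h^1(X,F_m)$ for all $m$.

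For this I would use that $X$ is Gorenstein of arithmetic genus $1$: $\deg\omega_X=2p_a(X)-2=0$, hence $\deg T_X=0$ and $\deg F_m=10m$, and Riemann--Roch on $X$ gives $h^0(X,F_m)-h^1(X,F_m)=10m$ for every $m$. Next I split into cases. For $m>0$: the bundle $\omega_X\otimes F_m^{-1}$ has negative degree on each component $C_i\cong\mathbb{P}^1$, so $H^0(X,\omega_X\otimes F_m^{-1})=0$, and Serre duality on the Gorenstein curve $X$ gives $H^1(X,F_m)=0$, whence $h^0(X,F_m)=10m$. For $m<0$: the multidegree of $F_m$ has all three entries negative, so a nonzero global section would restrict to a nonzero section of a negative line bundle on some $\mathbb{P}^1$; thus $H^0(X,F_m)=0$, and Riemann--Roch forces $h^1(X,F_m)=-10m$. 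Plugging these into Theorem~\ref{Graded-pieces} yields $\dim T^0(C(X))_m=10m$ for $m\ge 1$ and $0$ for $m\le 0$, and $\dim T^1(C(X))_m=0$ for $m\ge 0$ and $-10m$ for $m\le -1$, which are the two displayed formulas.

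Finally I would extract the four numbered conclusions from these formulas. Items (2) and (3) are immediate, since $T^1(C(X))_m=0$ for all $m\ge 0$: no positive-weight and no degree-zero (equisingular) equivariant deformations exist. Item (1) follows because the only nonvanishing graded pieces are $T^1(C(X))_{-1},T^1(C(X))_{-2},\dots$, and by Pinkham's theory the negative-weight part of $T^1$ of a cone parametrizes exactly the $\mathbb{G}_m$-equivariant deformations that lower the degrees of the defining equations and so deform the vertex, corresponding to deformations of the pair $(X,L)$ that smooth the nodes of $X$. For item (4), since $\dim T^1(C(X))_m=-10m>0$ for every $m\le -1$ there are infinitely many linearly independent deformation classes sitting in pairwise distinct negative weights; each integrates to an honest one-parameter family with smooth general fibre --- this is the negatively graded, unobstructed regime of the equivariant deformation functor of a cone over a projectively normal curve, and such a family is constructed explicitly in Theorem~\ref{thm:intro-smoothing} --- so the vertex is smoothable in infinitely many independent ways.

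The step I expect to need the most care is the degree-zero analysis: one must be sure that the genericity hypothesis genuinely kills $H^0(X,T_X)$ (equivalently, that $X$ carries no nonzero global vector field), so that $\chi(T_X)=0$ also gives $H^1(X,T_X)=0$, and that the short exact sequences of Theorem~\ref{Graded-pieces} at $m=0$ contribute nothing beyond this intrinsic part. A secondary point not to be skipped is the vanishing $H^0(X,F_m)=0$ for $m<0$: because $X$ is reducible this does not follow from $\deg F_m<0$ alone, but from the stronger fact that the component-degree of $F_m$ is negative on each of $C_1,C_2,C_3$, which one checks after determining the multidegree of $T_X$ (equivalently of $\omega_X$) on the three components.
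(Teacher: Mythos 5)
Your proposal follows essentially the same route as the paper: Pinkham's identification $T^i(C(X))_m\cong H^i(X,F_m)$ with $F_m=T_X\otimes L^{\otimes m}$ of degree $10m$, Riemann--Roch plus Serre duality to get the vanishing and the dimensions for $m\neq 0$, and the genericity/finite-automorphism argument forcing $H^0(X,T_X)=H^1(X,T_X)=0$ at $m=0$. Your closing observation that $H^0(X,F_m)=0$ for $m<0$ needs componentwise (not merely total) negativity of the multidegree is a refinement the paper's own vanishing argument silently assumes; it does hold here, since $\omega_X$ has multidegree $(0,1,-1)$, hence $F_m$ has multidegree $(4m,\,3m-1,\,3m+1)$, with all entries negative for $m\le -1$.
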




\begin{thebibliography}{9}  


 \bibitem[BE-95]{BE-95}{\sc D. Bayer and D. Eisenbud}, {\em Ribbons and their canonical embeddings}, Transactions of the AMS, {\bf 347} (1995), no. 3, pp. 719–756.

\bibitem[BG-80]{BG-80}{\sc Buchweitz, RO., Greuel, GM}, {\em The Milnor number and deformations of complex curve singularities.} Invent Math {\bf 58}, 241–281 (1980).  

\bibitem[G-58]{G-58}{\sc R. Godement}, {\em Topologie alg\'ebrique et th\'eorie des faisceaux}, Hermann, Paris, 1958. See Chapter~II, Sections~5.9--5.10.
 
\bibitem[G-75]{G-75}{\sc GM. Greuel}, {\em Der Gauß-Manin-Zusammenhang isolierter Singularitäten von vollständigen Durchschnitten}, Math. Ann. {\bf   214}, 235–266 (1975).



\bibitem[GLS-07]{GLS-07}{\sc G.-M. Greuel, C. Lossen \& E. Shustin}, {\em Introduction to Singularities and Deformations.}, Springer, (2007).

 
\bibitem[GH-78-77]{GH-78}{\sc P. Griffiths and J. Harris}, \emph{Principles of Algebraic Geometry}, Wiley, 1978, Appendix A.

\bibitem[GD-61]{EGAIII1}{\sc A. Grothendieck, and J. Dieudonn\'e}, {\em {\'E}l\'ements de g\'eom\'etrie alg\'ebrique. III. 
               \'Etude cohomologique des faisceaux coh\'erents. I}, Publications Math\'ematiques de l'IH\'ES, Vol {\bf 11} 1961.

\bibitem[H-77]{H-77}{\sc R.~Hartshorne}, {\em Algebraic Geometry}, Graduate Texts in Mathematics, vol.~52, Springer, 1977.
See Chapter~II, and Chapter~III, 
 
 \bibitem[H-10]{H-10}{\sc R.~Hartshorne}, \emph{Deformation Theory}, Graduate Texts in Mathematics 257, Springer, 2010.
 
\bibitem[I-71]{I-71}{\sc L.~Illusie}, \emph{Complexe cotangent et déformations I}, Lecture Notes in Math. {\bf 239}, Springer, 1971.
 
\bibitem[I-86]{I-86}{\sc B.~Iversen}, {\em Cohomology of Sheaves}, Springer-Verlag, 1986. See Chapter~V, Section~5 (``The local--global Ext spectral sequence'').

 \bibitem[KS-90]{K-90}{\sc M.~Kashiwara and P.~Schapira}, {\em Sheaves on Manifolds}, Springer-Verlag, 1990.
See Section~2.7.

\bibitem[N1-25]{N1-25}{\sc M.~Nisse}, {\em On infinitesimal deformations of singular varieties I}, Preprint, 2025.

\bibitem[N2-25]{N2-25}{\sc M.~Nisse}, {\em On infinitesimal deformations of singular varieties II}, Preprint, 2025.

\bibitem[N4-25]{N4-25}{\sc M.~Nisse}, {\em On infinitesimal deformations of singular varieties IV (View over phases)}, Preprint, 2025.


\bibitem[N5-25]{N5-25}{\sc M.~Nisse}, {\em On infinitesimal deformations of singular varieties V, (Deformation of singularity links via their phases.)}, Preprint, 2025.

\bibitem[LN1-25]{LN1-25}{\sc YK.~Lim and M.~Nisse}, {\em On infinitesimal deformations of singular varieties VI, (Applications in physics I:  Gravity and black hole.)}, in preparation, 2025.

\bibitem[LN2-25]{LN2-25}{\sc YK.~Lim and M.~Nisse}, {\em On infinitesimal deformations of singular varieties VII, (Applications in physics II: Crossing event horizons.)}, in preparation, 2025.

 \bibitem[P-74]{P-74}{\sc H. C. Pinkham}, {\em Deformations of algebraic varieties with $\mathbb{G}_ m$ action}, Ast\'erisque, 20, Soci\'et\'e Math\'ematique de France, Paris, 1974.

 \bibitem[S-06]{S-06}{\sc E.~Sernesi,}, \emph{Deformations of Algebraic Schemes}, Springer, 2006.
 
 \bibitem[S-68]{S-68}{\sc  M.~Schlessinger}, \emph{Functors of Artin Rings}, 
Trans.\ Amer.\ Math.\ Soc.\ \textbf{130} (1968), 208--222.

 \bibitem[W-94]{W-94}{\sc C.~A.~Weibel}, {\em An Introduction to Homological Algebra}, Cambridge Univ. Press, 1994, \S5.8.

 \end{thebibliography}
\end{document}